\renewcommand{\PrintDOI}[1]{\doi{#1}}
\theoremstyle{plain}
\newtheorem{thm}{Theorem}[section]
\newtheorem{ques}[thm]{Question}
\newtheorem{claim}[thm]{Claim}
\newtheorem{prop}[thm]{Proposition}
\newtheorem*{prop*}{Proposition}
\newtheorem*{seged*}{Sublemma}
\newtheorem{cor}[thm]{Corollary}
\newtheorem{lem}[thm]{Lemma}
\newtheorem*{cond*}{Condition}
\newtheorem{conj}[thm]{Conjecture}
\newtheorem*{lem*}{Lemma}
\theoremstyle{definition}
\newtheorem*{defn*}{Definition}
\newtheorem{fel*}[thm]{Exercise}
\newtheorem*{megf*}{Observation}
\theoremstyle{remark}
\newtheorem{rem}[thm]{Remark}
\newtheorem*{rem*}{Remark}
\newenvironment{sbiz}{\par\noindent{\itshape Proof:}\ }{\rule{1ex}{1.5ex}}
\title{Vertex-flames in countable rooted digraphs preserving an Erd\H os-Menger separation for each vertex}
\author{Attila Joó \thanks{Alfréd Rényi Institute of Mathematics,
Hungarian Academy of Sciences; MTA-ELTE Egerv\'ary Research Group. The research was supported by OTKA 113047.
 Email: {\tt jooattila@renyi.hu
 } }}
\date{2019}
\begin{document}
\maketitle
\begin{abstract}
It follows from a theorem of Lov\'asz  that if $ D $ is a finite digraph with $ r\in 
V(D) $ then there is a spanning subdigraph
$ E $ of $ D $ such that for every vertex $ v\neq r $ the following  quantities are equal: the local connectivity from $ r $ 
to $ v $ in $ D $, the local connectivity from 
$ r $ to $ v $ in $ E $ and the indegree of $ v $ in $ E $.

In infinite combinatorics cardinality is often an overly rough measure to obtain deep  results and it is more fruitful 
to capture structural properties instead of just equalities between certain quantities. The best known example for such a result 
is the generalization of Menger's theorem to infinite digraphs. 
We 
generalize the result of Lov\'asz above in this spirit.
Our main result is that every countable $ r $-rooted digraph $ D $ has a spanning subdigraph $ E $ with the following 
property.   For
every $ v\neq r $, $ E $  contains a system $ \mathcal{R}_v $ of internally disjoint $ r\rightarrow v $ paths such that the 
ingoing 
edges of $ v $ in $ E $ are exactly the last edges of the paths in $ \mathcal{R}_v $. Furthermore, the path-system $ 
\mathcal{R}_v $ is 
``big'' in $ D $ in the Erdős-Menger sense, i.e., one can choose from each path in 
$ \mathcal{R}_{v} $ either an edge or an internal vertex in such a way that a resulting set separates $ v $ from $ r $ in $ D $.
\end{abstract}

\section{Introduction}
Small subgraphs witnessing some kind of connectivity property  play an important role in graph theory. Let us recall 
a result of L. Lov\'asz of this manner. Consider a finite digraph $ D $  with a given root $ r\in V(D) $.   We are looking for a 
spanning subdigraph $ E $ of $ D $ that 
preserves the local 
vertex-connectivities from $ r $ (i.e.,   $ \boldsymbol{\kappa_D(r,v)}=\kappa_E(r,v) $ holds 
for every $ v\in V(D)-r $) with  a minimal possible number of edges. For every   $ v\in V(D)-r $, we need to keep at least 
$ \kappa_{D}(r,v) $ ingoing edges 
hence $ \sum_{v\in V(D)-r}\kappa_D(r,v)$ is a trivial lower bound for the number of edges of $ E $. 
Surprisingly this lower bound is always sharp.

\begin{thm}
[L. Lov\'asz]\label{Flame alaptetel}
If $ D $ is a finite digraph and $ r\in V(D) $, then there is a spanning subdigraph $ E $ of $ D $ such that for every $ 
v\in 
V(D)-r $
\[  \kappa_D(r,v)=\kappa_{E}(r,v)= \left|\mathsf{in}_E(v)\right|,\]
where $ \boldsymbol{\mathsf{in}_E(v)} $ is the set of the ingoing edges of $ v $ in $ E $.
\end{thm}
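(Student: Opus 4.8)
The plan is to build $E$ by starting from $D$ and repeatedly deleting, at a vertex that currently has more ingoing edges than its local connectivity from $r$, a single ingoing edge whose removal lowers no local connectivity, until no such deletion remains. Since the paths of an internally disjoint family of $r\to v$ paths end with pairwise distinct edges, one always has $|\mathsf{in}_E(v)|\ge\kappa_E(r,v)$; hence it suffices to produce a spanning subdigraph $E$ of $D$ with $\kappa_E(r,v)=\kappa_D(r,v)$ and $|\mathsf{in}_E(v)|\le\kappa_E(r,v)$ for every $v\ne r$, because then all three quantities coincide. Accordingly I would let $E$ be an edge-minimal spanning subdigraph of $D$ satisfying $\kappa_E(r,v)=\kappa_D(r,v)$ for all $v$; such an $E$ exists since $D$ itself has this property and $D$ is finite. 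The whole content of the theorem is then the claim that this minimality forces $|\mathsf{in}_E(v)|\le\kappa_E(r,v)$ at every vertex.

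Everything reduces to the following deletion lemma, which I would isolate and prove separately: if $H$ is a digraph with $r\in V(H)$ and $|\mathsf{in}_H(v)|>\kappa_H(r,v)$ for some $v\ne r$, then there is an edge $e\in\mathsf{in}_H(v)$ with $\kappa_{H-e}(r,w)=\kappa_H(r,w)$ for every $w\ne r$. Given the lemma the theorem is immediate: if in the minimal $E$ some vertex $v$ had $|\mathsf{in}_E(v)|>\kappa_E(r,v)$, applying the lemma with $H=E$ would produce an edge whose removal keeps every local connectivity equal to the one in $D$, contradicting the minimality of $E$. Hence $E$ is the desired flame, and iterating the lemma from $D$ also gives an explicit construction.

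For the deletion lemma I would use Menger's theorem together with the uncrossing structure of minimum vertex separators. Write $k=\kappa_H(r,v)$. An ingoing edge of $v$ whose tail is unreachable from $r$ is plainly deletable, so assume every such tail is reachable. Among all minimum $r$--$v$ separators (sets of $k$ vertices disjoint from $\{r,v\}$ meeting every $r\to v$ path) choose one, $S$, whose set $R_S$ of vertices reachable from $r$ in $H-S$ is inclusion-maximal; the existence of such a ``last'' separator follows by a standard uncrossing argument. No ingoing edge of $v$ can have its tail in $R_S$, since extending a path from $r$ to that tail inside $H-S$ along the edge would place $v$ into $R_S$, whereas $v\notin R_S$ because $S$ separates $r$ from $v$. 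As $|\mathsf{in}_H(v)|>k=|S|$, there is an ingoing edge $e=(u,v)$ whose tail $u$ lies strictly beyond $S$, that is $u\notin R_S\cup S$; this $e$ is the edge I would delete.

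The remaining step, verifying that this particular $e$ is deletable, is the one I expect to be the genuine obstacle. Assume it fails: $\kappa_{H-e}(r,w)<\kappa_H(r,w)=:k'$ for some $w$. By Menger's theorem applied in $H-e$ there is a set $S'$ with $|S'|=k'-1$ separating $r$ from $w$ in $H-e$; being too small it does not separate them in $H$, so an $r\to w$ path survives in $H-S'$, and it must traverse the edge $e$ and hence the vertex $v$. Thus $S'\cup\{v\}$ is a minimum $r$--$w$ separator, and there is a path $r\leadsto u\to v\leadsto w$ avoiding $S'$ whose prefix $r\leadsto u\to v$ is an $r\to v$ path avoiding $S'$. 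One then combines this prefix with a maximum family of internally disjoint $r\to v$ paths, each meeting $S$ in a single vertex, and with a maximum family of internally disjoint $r\to w$ paths, each meeting $S'\cup\{v\}$ in a single vertex, and uncrosses $S$ against the minimum $r$--$w$ separators; the aim is to reach either a minimum $r$--$v$ separator whose reachable set strictly contains $R_S$, contradicting the choice of $S$, or, equivalently, a system of $k'$ internally disjoint $r\to w$ paths inside $H-e$, contradicting that $e$ is essential for $w$. Once the deletion lemma is established, finitely many applications of it starting from $D$ yield the spanning flame $E$, completing the proof.
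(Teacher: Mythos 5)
Your framework is sound: passing to an edge-minimal spanning subdigraph preserving all $\kappa(r,v)$, noting $|\mathsf{in}_E(v)|\ge\kappa_E(r,v)$ always, and reducing everything to a deletion lemma is a legitimate route, and your edge selection is in fact a reasonable one. The existence of a ``last'' minimum $r$--$v$ separator $S$ (maximal reachable side $R_S$), the observation that no in-edge of $v$ has its tail in $R_S$, and the pigeonhole step producing $e=uv$ with $u\notin R_S\cup S$ are all correct. For $w=v$ your choice can even be justified: if every maximum internally disjoint $r\to v$ system used $e$, Menger in $H-e$ would give a minimum separator of $H$ containing $u$, while every minimum $r$--$v$ separator lies inside $R_S\cup S$ for the last separator $S$ -- contradiction. (You also gloss over the case $rv\in D$, where separators disjoint from $\{r,v\}$ need not exist; this is fixable but unaddressed.)

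The genuine gap is exactly where you say you expect it: the claim that deleting $e$ does not decrease $\kappa_H(r,w)$ for vertices $w\ne v$. That claim \emph{is} Lovász's theorem -- all other steps in your write-up are routine bookkeeping around it -- and what you offer for it is only a programme (``combine the prefix with two Menger families and uncross $S$ against the minimum $r$--$w$ separators, aiming at a contradiction''), with no argument that the uncrossing terminates in either of the two desired outcomes. Nothing in the proposal establishes it, so the proof is incomplete at its mathematical core. For comparison, the paper does not reprove the finite theorem: it cites Lovász, who proved the edge version and obtains the vertex version by vertex-splitting (Remark 1.3), and its own sketch of the finite case is the trimming lemma -- fix one maximum internally disjoint $r\to v$ system $\mathcal{P}$ and delete \emph{all} in-edges of $v$ unused by $\mathcal{P}$, which preserves every $\kappa(r,w)$; the infinite analogue of precisely this statement is Lemma \ref{key lemma}, proved in Section 3 via the bubble machinery (Proposition \ref{B_CE hatara}, Lemma \ref{char of largeness}). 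Your single-edge deletion lemma would follow from that trimming lemma by monotonicity (your $e$ is avoidable by some maximum system, as noted above), so the honest way to close your gap is to prove the trimming lemma itself -- or adapt the paper's largeness characterisation to the finite setting -- rather than the open-ended uncrossing you describe.
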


 The main result of this paper is a generalization of Theorem \ref{Flame alaptetel} to countable digraphs.  
The equations in Theorem \ref{Flame alaptetel} make sense in infinite digraphs as well. Even 
so, cardinality is an overly rough measure to give a satisfactory  generalization. Instead of the equation 
 $ \kappa_E(r,v)=\left|\mathsf{in}_E(v)\right| $ we demand the existence of a system $ \mathcal{P} $ of internally 
 disjoint directed paths from $ r $ to $ v $ in $ E $ such 
that the set of the last edges of the paths in $ \mathcal{P} $ is  $\mathsf{in}_E(v) $. If $ E $ satisfies this for each vertex 
$ v $, then  $ E $ is called a \textbf{vertex-flame} with respect to the root $ r $ (the name ``flame'' was given by G. 
Calvillo-Vives  who rediscovered Theorem \ref{Flame alaptetel} independently in his Ph.D. thesis \cite{flameVives}).

The equation $ \kappa_E(r,v)=\kappa_D(r,v) $ means that some maximal-sized internally disjoint $ r\rightarrow v $ 
path-system $ \mathcal{P} $ of $ D $ lies in $ E $. We want $ \mathcal{P} $ to be ``big'' in $ D $ not just cardinality-wise but in the  
Erdős-Menger sense. 
More precisely, we demand that one can choose from every  $P\in  \mathcal{P} $ either one internal vertex or an edge such 
that the resulting set separates $ v $ from $ r $, i.e., meets every $ r\rightarrow v $ path of  $ D $. Note that  the 
separation can be chosen as a vertex set $ S $ whenever $ rv\notin D $ and in the form $ S\cup \{ rv \} $ otherwise. The set  
of 
internally 
disjoint $ r\rightarrow 
v $ path-systems $ \mathcal{P} $ admitting such a separation  is denoted by $\boldsymbol{\mathfrak{I}_D(v)} $. It is easy 
to see that  the 
Aharoni-Berger theorem  
ensures that $ \mathfrak{I}_D(v)\neq \varnothing $  (see Theorem \ref{inf 
Menger} and the paragraph after it).
We define a spanning subdigraph $ L $ of $ D $ to be 
$ \boldsymbol{D} $\textbf{-vertex-large} if  for 
each vertex $ v\neq r $ of $ D $ there is a $ 
\mathcal{P}\in 
\mathfrak{I}_D(v) $ that lies in $ L $.  The main result of this paper is the following theorem. 

\begin{thm}\label{main result flame}
If $ D $ is a countable digraph with a given root vertex $ r $ then there exists a $ D $-vertex-large spanning 
subdigraph $ E $ of $ D $ which is a vertex-flame.
\end{thm}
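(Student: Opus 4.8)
The plan is to build $E$ by a transfinite recursion of length $\omega$ that handles the vertices one at a time, maintaining a growing ``core'' subdigraph together with, for each vertex treated so far, a witnessing path-system in $\mathfrak I_D(v)$. First I would fix an enumeration $v_0, v_1, \dots$ of $V(D)-r$. At stage $n$ I want to have a spanning subdigraph $E_n \subseteq D$ and path-systems $\mathcal R_{v_0}, \dots, \mathcal R_{v_{n-1}}$ with $\mathcal R_{v_i}\in\mathfrak I_D(v_i)$, all lying in $E_n$, such that $E_n$ is already a vertex-flame ``as far as $v_0,\dots,v_{n-1}$ are concerned'' in the sense that every ingoing edge of $v_i$ in $E_n$ is the last edge of a path in $\mathcal R_{v_i}$; crucially I also want $E_n$ to carry an Erdős–Menger separation for each $v_i$ that is respected by the construction, and that $\mathsf{in}_{E_n}(v_i)$ is ``closed'' so that later stages never add ingoing edges at an already-treated vertex. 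The recursion is the heart of the argument and the limit object $E = \bigcup_n E_n$ (taking care that the $\mathsf{in}$-sets at each vertex stabilize) will be the desired $D$-vertex-large vertex-flame.

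For the successor step, suppose $E_n$ and the systems for $v_0,\dots,v_{n-1}$ are built, and consider $v_{n}$. Inside $E_n$ I first apply the Aharoni–Berger theorem (Theorem \ref{inf Menger}) — or rather the fact that $\mathfrak I_{D}(v_n)\neq\varnothing$ — to obtain a system $\mathcal Q\in\mathfrak I_D(v_n)$ in $D$ together with a separator $S$; but I must merge $\mathcal Q$ with the edges already forced into $E$ at $v_n$ (namely $\mathsf{in}_{E_n}(v_n)$, which were last edges of paths in earlier systems), since those cannot be deleted. The right way to do this is to look at the auxiliary digraph obtained from $D$ by contracting/bypassing so that the already-fixed structure is preserved, run Erdős–Menger there, and then pull the resulting path-system and separator back; the fact that each edge in $\mathsf{in}_{E_n}(v_n)$ already lies on a path starting at $r$ (as it is a last edge of a path in some $\mathcal R_{v_i}$, and those paths are built from even earlier ones) is what makes this merge possible. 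Having fixed $\mathcal R_{v_n}\in\mathfrak I_D(v_n)$ with $\mathsf{in}$-set equal to the prescribed set of last edges, I then remove from $E_n$ every ingoing edge of $v_n$ that is \emph{not} a last edge of $\mathcal R_{v_n}$, and add all edges of all paths of $\mathcal R_{v_n}$; I also add, for every internal vertex $u$ of a path in $\mathcal R_{v_n}$, the initial segment of that path as a contribution toward $u$'s eventual system. Setting $E_{n+1}$ to be the result, one checks the invariants are maintained — in particular, deleting those ingoing edges of $v_n$ does not destroy the path-systems of $v_0,\dots,v_{n-1}$, because $v_n \neq v_i$ and internal disjointness plus the flame property localizes the dependence.

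The main obstacle, and the place where countability is used essentially, is controlling the interaction between vertices across infinitely many stages: when I fix $\mathcal R_{v_n}$ I am committing to keep its edges forever, so the edge-set at a vertex $v_i$ with $i<n$ could in principle keep changing; I must argue that it does not. The device for this is the flame/large invariant itself — since each $\mathcal R_{v_i}$ is internally disjoint and exhausts $\mathsf{in}_{E_n}(v_i)$, and the Erdős–Menger separator $S_i$ pins each path of $\mathcal R_{v_i}$ down, any path of a later system $\mathcal R_{v_n}$ passing near $v_i$ must route through $S_i$ in a constrained way, so I can (and will, in the construction) choose $\mathcal R_{v_n}$ to reuse existing path-segments of $\mathcal R_{v_i}$ up to $S_i$ rather than create new ingoing edges at $v_i$. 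Making this precise — choosing the path-systems coherently so that the $\mathsf{in}$-sets form an increasing, eventually-constant sequence at every vertex and the Erdős–Menger separations survive the limit — is the technical crux; the countability of $D$ guarantees the recursion has length $\omega$ and that ``eventually constant at every vertex'' suffices to make $E=\bigcup_n E_n$ well-defined with all the required properties.
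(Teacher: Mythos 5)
Your overall skeleton does match the paper's strategy (enumerate $V-r$, at stage $n$ fix a system in $\mathfrak{I}_D(v_n)$ whose last edges cover the finitely many ingoing edges of $v_n$ already committed by earlier systems, discard the other ingoing edges of $v_n$, and take the union), but the step you rely on to make this work contains a genuine gap. You justify the merge at $v_n$ by saying that each edge of $\mathsf{in}_{E_n}(v_n)$ ``already lies on a path starting at $r$''; this is not sufficient. Individual reachability of the tails does not give an internally disjoint $v_n$-infan covering all the committed edges simultaneously. Concretely, take $D$ with edges $rx$, $xu_1$, $xu_2$, $u_1v$, $u_2v$, $va$, $vb$ and enumerate $a,b,v,\dots$: the stage for $a$ may fix the system $\{rxu_1va\}$ and the stage for $b$ the system $\{rxu_2vb\}$ (nothing in your scheme forbids this, since $v$ is not yet treated), and then both $u_1v$ and $u_2v$ are committed forever, although no two internally disjoint $r\rightarrow v$ paths can end with these two edges because every $r\rightarrow v$ path passes through $x$. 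The recursion is stuck and the limit digraph is not a vertex-flame at $v$. This is precisely the obstruction that the paper removes \emph{before} starting the recursion, by Lemma \ref{large quasi flame}: one first passes to a $\subseteq$-maximal quasi-vertex-flame $F\subseteq D$ which is ``largeness-faithful'' (any $F$-vertex-large subdigraph is $D$-vertex-large), so that inside the ambient digraph every finite subset of $\mathsf{in}(v)$ is coverable by an internally disjoint infan. Your proposal has no substitute for this reduction, and without it the very first nontrivial invariant of your recursion cannot be maintained.

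Beyond that, the parts you defer as ``the technical crux'' are exactly the content of the paper's key lemmas and are not routine. After pruning ingoing edges at treated vertices one must show that some $\mathcal{P}\in\mathfrak{I}_D(v_n)$ --- with its Erd\H{o}s--Menger separation valid in all of $D$, not merely in a contracted auxiliary digraph --- still lies inside the pruned digraph $D_n$; this is Lemma \ref{key lemma}, proved via the bubble machinery, and your ``contract, run Erd\H{o}s--Menger, pull back'' sketch does not explain why the pulled-back set separates in $D$. One must also show that $D_n$ is again a quasi-flame so that the committed finite set of ingoing edges of $v_n$ is coverable inside $D_n$ (Lemma \ref{key lemma1}), and the actual merge of the $D$-optimal system with the covering system is carried out with Pym's theorem (Theorem \ref{Pym}) between the separator $S$ and $N^{\text{in}}(v_n)$, which is what lets one keep the separation and the prescribed last edges simultaneously; none of these appears in your outline. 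Finally, a smaller conceptual point: you do not need the in-sets to become eventually constant, nor to forbid later systems from adding ingoing edges at treated vertices; the paper only restricts later systems to the (possibly infinite) set $A_{\text{last}}(\mathcal{P}_m)$ at an already treated $v_m$, which is covered by $\mathcal{P}_m$ once and for all, so the flame property at $v_m$ survives the limit without any stabilization argument.
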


The paper is organized as follows. In the next subsection we present proof methods that work for finite digraphs but fail or 
insufficient for  
infinite ones and we introduce our proof strategy for countable digraphs. At the end of the first section we introduce some 
further
notation. In the second section we state our key lemmas without proofs and derive the main result from them in a single page. 
The third section is  devoted to the 
proofs of the key lemmas. In the last section we discuss some  open problems.\\

\textbf{Acknowledgement:} The author is very grateful for the thorough work of the referees. The paper improved a lot 
through their suggestions.
 
\begin{rem}\label{vertex from edge}
Lov\'asz proved Theorem \ref{Flame alaptetel} originally for edge-connectivity instead of vertex-connectivity  (see Theorem 2 
of 
\cite{lovasz1973connectivity})  from which the vertex version
follows. Indeed, let $ D' $ be the digraph
that we obtain from $ D $ by splitting every $ v\in V(D)-r $ to an edge $ t_vh_v $ where  $ t_v $ inherits the ingoing and $ 
h_v $ 
the outgoing edges of $ v $. Observe that the systems of edge-disjoint $ r\rightarrow t_v $ paths of $ D' $ and the systems 
of 
internally disjoint $ r\rightarrow v $ paths in $ D $ are in a natural correspondence. Let $ E' $ be that  we obtain by applying 
the edge version of Theorem 
\ref{Flame 
alaptetel} to the digraph $ D' $. We define $ E $ to be the spanning subdigraph of $ D $ 
consists of the common edges of $ D  $
and $ E' $. It is easy to check that $ E $ 
satisfies the conditions of Theorem \ref{Flame alaptetel}.
\end{rem}

\begin{rem}
Seemingly we promised a stronger property for $E  $ in the abstract than in Theorem \ref{main result flame}. Namely an $ 
\mathcal{R}_v $ in $ E $ for $ v\in V(D)-r 
$ 
that 
witnesses 
simultaneously the $ D $-vertex-largeness  and the vertex-flame property at $ v $.
If $ \mathcal{P}_v $ exemplifies the $ D $-vertex-largeness and $ \mathcal{Q}_v $ shows the vertex-flame property at $ v $ 
in $ E $ then an easy application of Pym's theorem (Theorem \ref{Pym}) results in an $ \mathcal{R}_v  $ that witnesses 
both. Hence the property of $ E $ given in the abstract is equivalent with two properties demanded in Theorem \ref{main 
result 
flame}.
\end{rem}
\subsection{Proof strategies informally}
One possible proof strategy, the original approach of Lov\'asz, for Theorem \ref{Flame alaptetel} is ``trimming'' $ D $ while  
keeping 
vertex-largeness. One can show for example 
that if $ \mathcal{P}$ is a system of internally disjoint $ r\rightarrow u $ paths of size $ \kappa_{D}(r,u) $ and we delete those $ 
e\in \mathsf{in}_D(u)  $ that 
are unused by $ \mathcal{P} $, then  $ \kappa_{L}(r,v) =\kappa_{D}(r,v) $ holds for $ v\in V(D)-r $ and of course
$ \kappa_{L}(r,u)=\left|\mathsf{in}_L(u)  \right|$ holds as well. Theorem \ref{Flame alaptetel} follows by applying this for each 
vertex one by one.

Our approach for the finite case was adding new edges repeatedly having a vertex-flame in each step.  We will see that if a 
vertex-flame 
$ 
F $ is not $ D $-vertex-large, then one can properly extend $ F $ with a suitable edge of $ D $ such that the result is still a 
vertex-flame. By iterating this, we can extend any vertex-flame  of $ D $ to  a $ D $-vertex-large vertex-flame whenever $ D 
$ is  
finite 
(actually the assumption ``$ \kappa_D(r,v) <\aleph_0$ for every $ v\in V(D)-r $'' is enough).

It will turn out that the key ideas of the proof sketches above still work in the general case, and moreover, they are 
compatible with 
our 
stronger definitions.  For example if $ \mathcal{P}\in \mathfrak{I}_D(v) $ then the $ L $ that we obtain from $ D $ by the 
deletion of  those 
ingoing edges of $ v $ that are unused by $ \mathcal{P} $
is $ D $-vertex-large  (not just $ \kappa_{L}(r,v) =\kappa_{D}(r,v) $ holds for $ v\in V(D)-r $). The main difficulty is that by 
iterating the deletions or extensions infinitely many times we may lose at a limit step the  property we intended to keep. In the 
case of the 
deletions, the situation is actually worse. In the finite case, vertex-largeness is transitive in the sense that if $ L_1 $ is $ D 
$-vertex-large and $ L_2 $ is $ L_1
$-vertex-large, then $ L_2 $ is $ D $-vertex-large. (Indeed, if $ \kappa_D(r,v)= \kappa_{L_1}(r,v) $ and 
$ \kappa_{L_2}(r,v)= \kappa_{L_1}(r,v) $  for every $ v $, then $ \kappa_D(r,v)= \kappa_{L_1}(r,v) $ for every $ v $.)
Examples show that this transitivity of vertex-largeness does not hold in general, thus applying  twice a vertex-largeness 
preserving  edge-deletion may already be problematic.

Our proof strategy for the countable case is a mixture of the two approaches above. In every step we fix some edges and 
delete 
some others. In a general 
step we fix the edges of a $ \mathcal{P}\in \mathfrak{I}_D(v) $ for the next $ v $ with respect to a fixed enumeration in such a way 
that $ 
\mathcal{P} $ covers all the 
(finitely many) ingoing edges of $ v $ that are already fixed. Right after this we delete all the ingoing edges of $ v $ that are 
unused by $ 
\mathcal{P} $. It turns out that this way we keep $ D $-vertex-largeness, and furthermore, $ \mathcal{P} $ witnesses that in 
the 
final 
digraph we do not violate the vertex-flame property at $ v $. A critical part of the proof is to guarantee that each step we are 
really
able to cover a finite subset of ingoing edges of a given vertex $ v $ by a system of internally disjoint $ r\rightarrow v $ 
paths.  A 
rooted
digraph $ F $ is called a \textbf{quasi-vertex-flame} if for each vertex $ v\in V(D)-r 
$ every 
finite subset of $ \mathsf{in}_F(v) $
can be covered in $ F $ by an internally disjoint $ r\rightarrow v $ path-system.  It turns out that our iterative 
process maintains 
 the quasi-vertex-flame property assuming we have it at the beginning. We show that a maximal quasi-vertex-flame $ F $ in $ 
 D $ has a very strong property (it is ``vertex-largeness faithful'' with respect to $ D $) that allows us to replace $ D $ by $ F 
 $ before starting the process described above.

\subsection{Notation}
We apply some  standard notation from set theory. Variables $ 
\boldsymbol{\alpha,\beta,\gamma} $ stand for ordinals,  the smallest limit ordinal (i.e., the set of the natural numbers) is 
 $ \boldsymbol{\omega} $. For a 
family of sets $ \mathcal{X} $, the 
union of 
the elements of $ \mathcal{X} $ is denoted by $ \boldsymbol{\bigcup \mathcal{X}} $. 
For an ordered pair $ \{ \{ u \}, \{ u,v \} \} $, we write simply  $ \boldsymbol{uv} $. We use the abbreviations $ 
\boldsymbol{X-x} $ 
and $ \boldsymbol{X+x }$ 
for $ X\setminus \{ x \} $ and $ X\cup \{ x \} $ respectively.   

Let an infinite vertex set $ \boldsymbol{V} $ and a ``root vertex'' $ \textbf{r}\in V $  be fixed through the paper. A \textbf{digraph} is a subset 
of $ 
V\times V $. The vertex set $ V $ and hence the digraphs may have arbitrary large infinite size (except in the proof of the 
main result in section 2 where we will
restrict it to countably infinite).  The 
set of the ingoing and outgoing
 edges of a $ v\in V $ with 
 respect to $ D $ is
 denoted by $ \boldsymbol{\mathsf{in}_D(v)}$ and $ \boldsymbol{\mathsf{out}_D(v)} $ respectively.  For 
 the set of the in-neighbours of a vertex $ v $ we write  $ \boldsymbol{N^{\text{\textbf{in}}}_D(v)}$, and $ 
 \boldsymbol{N^{\text{\textbf{out}}}_D(v)} 
 $ stands for the out-neighbours. In several definitions and statements the root $ r $ will play a special role while the ingoing 
 edges of 
  $ r $ are irrelevant. This motivates to define a  \textbf{rooted digraph} as a digraph $ D $ with $ 
  \mathsf{in}_D(r)=\varnothing 
  $.

 A $ v_0\rightarrow v_n $ path for $ v_0\neq v_n\in V $ is a digraph $ P=\{ v_0v_1,v_1v_2,\dots,v_{n-1}v_n \} $ where $ v_i\in 
 V $ 
 are pairwise distinct.  The singleton $ \{ v \} $ is
 considered a  $ v\rightarrow v $ path. We say that $ P $ is an 
   $ \boldsymbol{X \rightarrow Y} $ 
   \textbf{path} for some $ X,Y\subseteq V $ if exactly the first vertex of $ P $ is in $ X $ and exactly the last is in $ Y $. Let $ D 
   $ be a digraph and let $ 
   X,Y,S\subseteq V $.  If every $ 
     X\rightarrow Y $ path of $ D $ meets $ S\subseteq V $, then  we say that $ S $ \textbf{separates} $ Y $ from $ X $ (or $ S $ is 
     an $ \boldsymbol{XY} $\textbf{-separation}) in $ D $.  A system $ 
  \mathcal{P} $ 
  of $ u\rightarrow v $ paths is \textbf{internally disjoint} if the common vertices of any  $P\neq Q\in 
  \mathcal{P} $ 
  are exactly $ u $ and 
  $ v $.  A path-system $ \mathcal{P} $ is an $ \boldsymbol{r} $\textbf{-fan} if  any two 
  distinct paths in   $\mathcal{P} $ have only their initial vertex $ r $ in common. If the terminal vertex $ v $ is the only 
  common vertex
  then we call the path-system a $ \boldsymbol{v} $\textbf{-infan}. 
Let $ \boldsymbol{V_{\text{\textbf{first}}}(\mathcal{P})}
 $ be the set of the first vertices of the paths in $ \mathcal{P} $ and we define $ 
 \boldsymbol{V_{\text{\textbf{last}}}(\mathcal{P})}$  analogously.
 The set of the last edges of the paths in $ \mathcal{P} $ is denoted  by  $ \boldsymbol{A_{\text{\textbf{last}}}( 
 \mathcal{P})}$. 
 For a rooted digraph $ D $ and $ v\in V-r $, let us denote by $ \boldsymbol{\mathcal{G}_{D}(v)} $ the set of 
 those $ I\subseteq 
 \mathsf{in}_D(v) $ for which there is a 
 system $ 
 \mathcal{P} $ of internally disjoint $ r \rightarrow v $ paths in $ D $ with $ A_{\text{last}}(\mathcal{P})=I $.   The rooted 
 digraph $F$ is 
 a \textbf{vertex-flame} if  $ \mathsf{in}_F(v)\in \mathcal{G}_{F}(v) $ for every $ v\in V-r $. 
 If  for 
 every $ v\in V-r$ and for 
 every \emph{finite}  $ I\subseteq \mathsf{in}_{F}(v) $  we have $ I\in \mathcal{G}_{F}(v) $, then  $ F $ is defined to be a 
 \textbf{quasi-vertex-flame}. To 
 improve the flow 
 of words, we write simply flame, quasi-flame and large 
 instead of 
 vertex-flame, quasi-vertex-flame and vertex-large (except in Theorems, Lemmas etc.). The edge version of these concepts 
 appear only 
 among the open 
 problems in the last section hence it will not lead to confusion.

\section{The proof of the main result}
In this section we state our key lemmas without proofs  and derive our main result from them.

\begin{lem}\label{large quasi flame}
For every rooted digraph $ D $, there is a quasi-vertex-flame $ F\subseteq D $ such that whenever an $ L\subseteq F $ is $ F 
$-vertex-large it is $ D $-vertex-large as well. 
\end{lem}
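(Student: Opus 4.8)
The plan is to take $F$ to be a maximal quasi-vertex-flame contained in $D$ (with respect to inclusion), which exists by Zorn's lemma since the union of a chain of quasi-vertex-flames is again a quasi-vertex-flame: for a fixed $v$ and a fixed finite $I\subseteq\mathsf{in}_F(v)$, the set $I$ already lies in $\mathsf{in}_{F'}(v)$ for some member $F'$ of the chain, and $I\in\mathcal{G}_{F'}(v)\subseteq\mathcal{G}_F(v)$ since witnessing path-systems are preserved under taking supergraphs. So the content of the lemma is entirely in the "vertex-largeness faithful" property: if $L\subseteq F$ is $F$-vertex-large, then $L$ is $D$-vertex-large.

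First I would unpack what must be shown: given $L\subseteq F$ that is $F$-vertex-large and given $v\neq r$, I must produce some $\mathcal{P}\in\mathfrak{I}_D(v)$ lying in $L$. Since $L$ is $F$-vertex-large there is already $\mathcal{P}\in\mathfrak{I}_F(v)$ inside $L$; the issue is that the separation witnessing $\mathcal{P}\in\mathfrak{I}_F(v)$ only separates $v$ from $r$ in $F$, not necessarily in $D$. The natural move is to suppose not: then there is an $r\rightarrow v$ path $Q$ in $D$ avoiding the chosen separation $S$ (and avoiding $rv$ if that edge was used), and I want to derive a contradiction with the maximality of $F$. The idea should be that $Q$, or a suitable piece of it, can be added to $F$ while keeping $F$ a quasi-vertex-flame — contradicting maximality unless $Q\subseteq F$ already, in which case $\mathcal{P}$ fails to be "big" in $F$ either, contradicting $\mathcal{P}\in\mathfrak{I}_F(v)$.

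The heart of the argument, and what I expect to be the main obstacle, is a local statement of roughly the following shape: if $F$ is a quasi-vertex-flame and $e=xy\notin F$ is an edge of $D$ such that $y$ can be reached from $r$ in $F$ by a path avoiding $x$ together with "enough room" to absorb $e$ into the fan structure at every vertex, then $F+e$ is still a quasi-vertex-flame. Proving $F+e$ is a quasi-flame means: for every vertex $w$ and every finite $I\subseteq\mathsf{in}_{F+e}(w)$, one has $I\in\mathcal{G}_{F+e}(w)$. The only $w$ for which anything changes is $w=y$, where $I$ may now contain $e$; given a finite $I\ni e$ one takes an internally disjoint $r\rightarrow y$ path-system in $F$ covering $I-e$ and must reroute/augment it to also use $e$ as a last edge — this is a Menger-type augmentation where one needs that the old path through $y$'s predecessors does not block a new $r\rightarrow x$ path in $F$ disjoint from the others. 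Here I would invoke Pym's theorem (Theorem \ref{Pym}) or a direct augmenting-path argument on the internally disjoint path-system to perform the surgery. One must be careful that adding $e$ at $y$ does not retroactively destroy the finite-coverability at vertices lying on the rerouted paths; the quasi-flame property (only finite subsets must be coverable, independently for each vertex) is exactly what makes this manageable, and is presumably why the authors work with quasi-flames rather than flames. The contradiction with maximality then forces the separation $S$ to work in $D$ as well, completing the proof.
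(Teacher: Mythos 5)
Your Zorn's lemma step is exactly the paper's: take a $\subseteq$-maximal quasi-flame $F\subseteq D$, noting that unions of chains of quasi-flames are quasi-flames. Your overall plan for the rest---contradict maximality of $F$ by absorbing a suitable edge of $D\setminus F$ into $F$---is also the paper's high-level strategy (its Lemma \ref{superlarge}). But the step you yourself call the main obstacle is precisely where the sketch stops being a proof, and it cannot be closed by ``Pym or a direct augmenting-path argument'' applied to your $\mathcal{P}$ and your chosen separation $S$. Two concrete problems. First, you never identify which edge $e$ of $Q\setminus F$ is to be added, nor give a checkable hypothesis under which $F+e$ remains a quasi-flame; ``enough room to absorb $e$'' is exactly what must be proved, and for a maximal $F$ the conclusion is false for \emph{every} $e\in D\setminus F$, so the contradiction has to be extracted from the $F$-largeness of $L$---which your argument never uses beyond producing $\mathcal{P}$. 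Second, the required augmentation (for each finite $I\subseteq\mathsf{in}_F(y)$, an $r\rightarrow x$ path internally disjoint from a system covering $I$) can genuinely fail; the point is that its failure returns a separation which must contradict something canonical. An arbitrary $S$ witnessing $\mathcal{P}\in\mathfrak{I}_F(v)$ is not canonical enough: an edge of $Q$ crossing $S$ may still have its tail inside the maximal $v$-bubble $B_{v,L}$, in which case $F+e$ really is not a quasi-flame and no contradiction with maximality arises.

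What the paper does instead is to replace your $S$ by the entrance of the maximal bubble and to build the machinery your sketch lacks: bubbles and their closure under unions (Corollary \ref{bigest bubble}), the fact that $\mathsf{ent}_{L-rv}(B_{v,L})$ is an Erd\H{o}s--Menger separation (Proposition \ref{nagybubi hatar}), and the characterisation of largeness (Lemma \ref{char of largeness}): $L$ is $D$-large iff $u\in B_{v,L}$ for every $uv\in D\setminus L$, and $F$-largeness of $L$ moreover forces the entrance equality $\mathsf{ent}_{F-rv}(B_{v,L})=\mathsf{ent}_{L-rv}(B_{v,L})$. Failure of $D$-largeness then hands you an in-edge $uv\in D\setminus L$ with $u\notin B_{v,L}$ (not, in general, an edge on your path $Q$), Claim \ref{one more path} (an augmenting-walk argument which cannot fail, because failure would yield a bubble strictly larger than the maximal one) gives an $r$-fan in $L$ onto $\mathsf{ent}_{L-rv}(B_{v,L})+u$, and the entrance equality lets one splice this fan inside $F$ with terminal segments of a system witnessing $I\in\mathcal{G}_F(v)$, yielding $(I+uv)\in\mathcal{G}_{F+uv}(v)$ for every finite $I$ (Lemma \ref{berak coloop edge})---contradicting maximality of $F$. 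None of this (maximal bubbles, the largeness characterisation, the entrance equality, the augmenting-walk claim) appears in your proposal, so the absorption step is a genuine gap rather than a routine application of Pym. A minor point: your worry that adding $e$ could ``retroactively destroy'' coverability at other vertices is vacuous, since adding an edge only enlarges each $\mathcal{G}(w)$; the only real difficulty is the one described above.
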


\begin{lem}\label{key lemma}
Let $ L\subseteq D $ be rooted digraphs such that for every $ v\in V-r $ with  $ \mathsf{in}_L(v)\subsetneq \mathsf{in}_D(v) $ 
there is a $ \mathcal{P}\in \mathfrak{I}_D(v) $ that lies in $ L $. Then $ L $ is 
$ D $-vertex-large.
\end{lem}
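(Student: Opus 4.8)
The plan is to fix an arbitrary $ v\in V-r $ and exhibit a member of $ \mathfrak{I}_D(v) $ that lies inside $ L $. If $ \mathsf{in}_L(v)\subsetneq\mathsf{in}_D(v) $ this is literally the hypothesis, so the whole content is in the vertices with $ \mathsf{in}_L(v)=\mathsf{in}_D(v) $; for such a $ v $ the last edge of every $ r\rightarrow v $ path of $ D $ is already in $ L $, and the only way a path of $ D $ can leave $ L $ is via an edge of $ D\setminus L $, whose head is automatically a member of $ A:=\{\,w\in V-r:\mathsf{in}_L(w)\subsetneq\mathsf{in}_D(w)\,\} $. First I would fix, for every $ w\in A $, a system $ \mathcal{P}_w\in\mathfrak{I}_D(w) $ lying in $ L $ (provided by the hypothesis) together with a witnessing transversal $ S_w $, i.e.\ one internal vertex or one edge from each path of $ \mathcal{P}_w $, forming an $ rw $-separation of $ D $. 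These act as ``repair kits'': whenever a path we handle reaches some $ w\in A $ along an edge of $ D\setminus L $, its initial segment will be thrown away and replaced by a path of $ \mathcal{P}_w $, which runs from $ r $ to $ w $ entirely inside $ L $.

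Next I would start from any $ \mathcal{Q}_0\in\mathfrak{I}_D(v) $ with a witnessing $ rv $-separation $ S_0 $ of $ D $ (available from the Aharoni--Berger theorem applied inside $ D $) and run a transfinite rerouting. Well-order $ A $ as $ \{w_\alpha:\alpha<\lambda\} $; the invariant maintained at stage $ \alpha $ is that $ \mathcal{Q}_\alpha $ is an internally disjoint $ r\rightarrow v $ path-system of $ D $, that it uses no edge of $ D\setminus L $ whose head lies in $ \{w_\beta:\beta<\alpha\} $, and that it carries a witnessing transversal contained in $ S_0\cup\bigcup_{w\in A}S_w $. At a successor stage $ \alpha+1 $ I would take the path of $ \mathcal{Q}_\alpha $ (there is at most one, as $ w_\alpha\neq r $, and $ w_\alpha\neq v $ in the nontrivial case) that reaches $ w_\alpha $ along an edge of $ D\setminus L $, cut it at $ w_\alpha $, throw away its initial segment, and, using Pym's theorem to coordinate with the remaining paths of $ \mathcal{Q}_\alpha $, splice in from $ \mathcal{P}_{w_\alpha} $ a replacement $ r\rightarrow w_\alpha $ segment lying in $ L $, cleaning up the collisions with the old tail and with the other paths by the usual shortcutting so that $ \mathcal{Q}_{\alpha+1} $ is again an internally disjoint $ r\rightarrow v $ path-system of $ D $; simultaneously I would adjust the transversal to keep it inside $ S_0\cup\bigcup_w S_w $ and an $ rv $-separation of $ D $ orthogonal to $ \mathcal{Q}_{\alpha+1} $. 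At a limit stage I would pass to the natural limit path-system. After all $ \lambda $ stages, no edge of $ D\setminus L $ survives, so the final system belongs to $ \mathfrak{I}_D(v) $ and lies in $ L $.

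The delicate part, precisely as the informal discussion in the paper announces, is not any single rerouting but the limit stages: one has to set up the reroutings so that each individual (necessarily finite) path is altered in a well-founded way --- a bad edge, once deleted, stays deleted, which should keep things under control but has to be checked --- and, more seriously, one has to verify that the transversals, which are being stitched together from the fixed sets $ S_0 $ and $ S_w $, still fuse at a limit into one set that is orthogonal to the limit system and meets every $ r\rightarrow v $ path of $ D $. It is also crucial \emph{not} to try to reduce the statement to the one-vertex version (deleting the unused ingoing edges of a single $ v $ with some $ \mathcal{P}\in\mathfrak{I}_D(v) $ in hand) and then iterate it: as the excerpt stresses, $ D $-vertex-largeness fails to be transitive, so all the reroutings, for every vertex of $ A $ at once, have to be carried out inside a single construction in which $ \mathcal{P}_w $ and $ S_w $ keep referring to the original $ D $ and $ L $.
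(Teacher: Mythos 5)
Your reduction to the vertices $v$ with $\mathsf{in}_L(v)=\mathsf{in}_D(v)$ is correct, and rerouting through the given systems $\mathcal{P}_w$ is a natural instinct, but the proposal has two genuine gaps, and they sit exactly where the difficulty of the lemma lies. The first is the separation. Membership in $\mathfrak{I}_D(v)$ is not ``internally disjoint, lies in $L$''; you must exhibit a transversal (one edge or internal vertex per path) that meets every $r\rightarrow v$ path of \emph{all of} $D$. Your plan to keep the transversal inside $S_0\cup\bigcup_{w}S_w$ is only asserted: each $S_w$ separates $w$ from $r$, not $v$ from $r$, so after you discard the initial segment carrying the old transversal vertex and splice in a path of $\mathcal{P}_{w_\alpha}$, choosing a vertex of $S_{w_\alpha}$ on the new segment gives no reason that the updated set still meets every $r\rightarrow v$ path of $D$; moreover the ``usual shortcutting'' that restores internal disjointness creates hybrid paths that need not pass through their previously chosen transversal vertices at all. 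Nothing in the proposal argues that this can be repaired, at successor stages or at limits, and this is precisely the content one has to prove. The second gap is the limit step you flag but do not check: a fixed path can be rebuilt at unboundedly many stages, not because it ever reacquires a bad edge, but because every rerouting of \emph{another} path may collide with it and force a cleanup; hence the ``natural limit path-system'' need not exist, and no well-founded bookkeeping is offered (the appeal to Pym's theorem is too vague to supply one).

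For comparison, the paper does not construct path-systems for the unaffected vertices at all. It first proves a characterisation of largeness (Lemma \ref{char of largeness}): $L\subseteq D$ is $D$-vertex-large if and only if $u\in B_{v,L}$ for every $uv\in D\setminus L$, where $B_{v,L}$ is the largest $v$-bubble in $L$; the witnessing Erdős--Menger separation is then produced canonically as $\mathsf{ent}_{L-rv}(B_{v,L})$, and showing it separates in $D$ (not just in $L$) is done once and for all via the bubble-union lemma. With that in hand, the hypothesis of the present lemma is used only through Proposition \ref{B_CE hatara}: for $uv\in D\setminus L$, a $\mathcal{P}\in\mathfrak{I}_D(v)$ lying in $L$ with witness $S$ gives $u\in B_{S,v,D}\subseteq B_{S,v,L}\subseteq B_{v,L}$, and the lemma follows in a few lines. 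In effect, the machinery you would have to invent to close both gaps above is this bubble machinery; I recommend restructuring your argument around such a canonical separation rather than trying to stitch transversals through a transfinite rerouting.
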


\begin{lem}\label{key lemma1}
If   $ D $ is a quasi-vertex-flame and $ L $ is $ D $-vertex-large then $ L $ is a quasi-vertex-flame as well.
\end{lem}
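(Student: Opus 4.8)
The plan is to verify the quasi-flame property of $L$ one vertex at a time. Fix $v\in V-r$ and a finite set $I\subseteq \mathsf{in}_L(v)$; it suffices to show $I\in\mathcal G_L(v)$. Let $L_I$ and $D_I$ be obtained from $L$ and from $D$ respectively by deleting all ingoing edges of $v$ lying outside $I$, so that $\mathsf{in}_{L_I}(v)=\mathsf{in}_{D_I}(v)=I$ and $L_I\subseteq D_I$. An internally disjoint $r\to v$ path-system with last-edge set $I$ uses no ingoing edge of $v$ outside $I$, hence $I\in\mathcal G_L(v)$ iff $I\in\mathcal G_{L_I}(v)$, i.e.\ iff $L_I$ satisfies the flame condition at $v$; likewise, $D$ being a quasi-flame gives $I\in\mathcal G_{D_I}(v)$, and since $I$ is finite and distinct paths of an internally disjoint $r\to v$ system have distinct last edges, this forces $\kappa_{D_I}(r,v)=|I|$. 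Consequently, if $L_I$ is $D_I$-vertex-large, the witnessing $\mathcal P\in\mathfrak I_{D_I}(v)$ lies in $L_I$, its Erd\H{o}s--Menger separator has size $|\mathcal P|$, whence $|\mathcal P|=\kappa_{D_I}(r,v)=|I|$, and from $A_{\text{last}}(\mathcal P)\subseteq\mathsf{in}_{L_I}(v)=I$ we conclude $A_{\text{last}}(\mathcal P)=I$ and $I\in\mathcal G_{L_I}(v)\subseteq\mathcal G_L(v)$. So it is enough to prove that $L_I$ is $D_I$-vertex-large.

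For this I would invoke Lemma~\ref{key lemma} for the pair $L_I\subseteq D_I$: it is enough to exhibit, for each $w\in V-r$ with $\mathsf{in}_{L_I}(w)\subsetneq\mathsf{in}_{D_I}(w)$, some $\mathcal Q\in\mathfrak I_{D_I}(w)$ that lies in $L_I$. At $w=v$ there is nothing to check, since $\mathsf{in}_{L_I}(v)=\mathsf{in}_{D_I}(v)=I$. For $w\neq v$ we have $\mathsf{in}_{L_I}(w)=\mathsf{in}_L(w)$ and $\mathsf{in}_{D_I}(w)=\mathsf{in}_D(w)$, so we are in the case $\mathsf{in}_L(w)\subsetneq\mathsf{in}_D(w)$; here $D$-vertex-largeness of $L$ supplies some $\mathcal Q\in\mathfrak I_D(w)$ inside $L$ together with an Erd\H{o}s--Menger separation, and since $D_I\subseteq D$ this same data certifies $\mathcal Q\in\mathfrak I_{D_I}(w)$ — unless some path of $\mathcal Q$ enters $v$ along an ingoing edge of $v$ outside $I$. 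At most one path $P^*\in\mathcal Q$ passes through $v$ at all (the paths are internally disjoint and $w\neq v$), so the only obstruction is that $P^*$ reaches $v$ via some $e'=u'v\notin I$, and we must reroute the $r\to v$ initial part of $P^*$ so that it instead reaches $v$ along an edge of $I$, keeping it inside $L$ and internally disjoint from $\mathcal Q-P^*$.

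This rerouting is the main obstacle, and it is where the full strength of $D$-vertex-largeness — not merely the equalities $\kappa_L(r,\cdot)=\kappa_D(r,\cdot)$ — is used. Applying $D$-vertex-largeness of $L$ at $v$ (and, as it turns out, at some in-neighbours of $v$ as well) yields path-systems inside $L$ together with their Erd\H{o}s--Menger separators; the separator $S$ of the system at $v$ picks one vertex or edge from each of its paths and separates $v$ from $r$ in $D$, so that system reaches $S$ inside $L$, and any $r\to v$ walk of $D$ that one is tempted to use but that leaves $L$ must already cross $S$ and can be diverted back onto the $L$-system there; meanwhile the quasi-flame hypothesis on $D$ keeps the edges of $I$ available as rerouting targets. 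The delicate point is the simultaneous bookkeeping — the reroute must dodge the internal vertices of the other paths of $\mathcal Q$ and of the relevant segments of the chosen $L$-systems at once — which in the countable setting is a Menger-type augmentation to be run through the Aharoni--Berger theorem and Pym's theorem rather than by a finite exchange. Equivalently, what has to be established is that for every $v$ and every finite $I\subseteq\mathsf{in}_L(v)$ with $I\in\mathcal G_D(v)$ one already has $I\in\mathcal G_L(v)$.
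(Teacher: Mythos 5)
Your first paragraph is a correct reduction: granting that $L_I$ is $D_I$-vertex-large, the counting argument (separator size $=|\mathcal P|$, $\kappa_{D_I}(r,v)=|I|$, hence $A_{\text{last}}(\mathcal P)=I$) does yield $I\in\mathcal G_L(v)$. But the entire content of the lemma has been pushed into the unproved assertion that $L_I$ is $D_I$-vertex-large, and your attempt to get it from Lemma \ref{key lemma} stalls exactly where the real difficulty sits. For a vertex $w\neq v$ the path system $\mathcal Q\in\mathfrak I_D(w)$ supplied by $D$-largeness may run through $v$ along an edge outside $I$, and you must replace the $r\rightarrow v$ portion of that path by one lying in $L_I$, internally disjoint from the rest of $\mathcal Q$, \emph{and} you must then re-establish the Erd\H{o}s--Menger separation certifying membership in $\mathfrak I_{D_I}(w)$ --- the element originally chosen from $P^*$ may lie on the discarded initial segment, so the old separation need not be a valid choice for the modified system. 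None of this is carried out; the third paragraph is a description of the obstacle rather than an argument, and its closing sentence (``what has to be established is that every finite $I\subseteq\mathsf{in}_L(v)$ with $I\in\mathcal G_D(v)$ satisfies $I\in\mathcal G_L(v)$'') is, under the quasi-flame hypothesis on $D$, literally the conclusion of the lemma itself. So the proposed reduction is circular: it reduces the lemma to the lemma.

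The missing ingredient is the paper's Claim \ref{reach finite preserved}: if a finite set $X\subseteq V-r$ is the set of endpoints of an $r$-fan in $D$ and $L$ is $D$-vertex-large, then $L$ already contains an $r$-fan with endpoint set $X$. With that in hand the lemma is short: if $\kappa_D(r,v)<\aleph_0$ one only needs cardinality bookkeeping, and if $\kappa_D(r,v)\geq\aleph_0$ one takes the $D$-witness for the finite set $J$, deletes the last edges, adds one further path ending at $v$ itself (so that the fan obtained in $L$ cannot use $v$ internally), applies the claim, discards the path into $v$, and re-attaches the edges of $J$. The claim itself is where the strength of largeness is genuinely used, and its proof is not a routine Menger augmentation: it proceeds by attaching an auxiliary vertex beyond $X$, invoking the bubble machinery (Proposition \ref{nagybubi hatar}, Claim \ref{one more path}, Lemma \ref{berak coloop edge}) and deriving a contradiction with a finite local connectivity. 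A generic appeal to the Aharoni--Berger theorem and Pym's theorem does not substitute for this step, so as it stands the proposal has a genuine gap.
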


\begin{thm}[Pym, \cite{pym1969linking}]\label{Pym}
Let $ D $ be a digraph and let $ \mathcal{P},\mathcal{Q} $ be systems of disjoint $ X\rightarrow Y $ paths for some 
$ X,Y\subseteq V$. Then there 
is a system $ \mathcal{R} $ of disjoint $ X\rightarrow Y $ paths for which $ V_{\text{first}}(\mathcal{R}) \supseteq 
V_{\text{first}}(\mathcal{P}) 
$ and $ V_{\text{last}}(\mathcal{R}) \supseteq V_{\text{last}}(\mathcal{Q}) $. 
\end{thm}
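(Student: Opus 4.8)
The plan is to reduce the statement to a two-coloured, finite-degree auxiliary digraph and then build the amalgamated family by a transfinite recursion that secures the prescribed last vertices one at a time.

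\textbf{Reduction.} I would first replace $D$ by $H:=\bigcup(\mathcal P\cup\mathcal Q)$, the subdigraph consisting of the edges lying on the given paths. Since the members of $\mathcal P$ are pairwise disjoint and so are those of $\mathcal Q$, every vertex of $H$ lies on at most one path of $\mathcal P$ and on at most one of $\mathcal Q$; colouring the $\mathcal P$-edges blue and the $\mathcal Q$-edges red, each vertex thus has at most one incoming and one outgoing edge of each colour. A subpath of $H$ is an $X\to Y$ path exactly when its first vertex is in $X$ and its last in $Y$, and $X\cap V(H)$ (resp.\ $Y\cap V(H)$) is precisely the set of first (resp.\ last) vertices of the members of $\mathcal P\cup\mathcal Q$. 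Writing $A:=V_{\text{first}}(\mathcal P)$ and $B:=V_{\text{last}}(\mathcal Q)$, it suffices to produce a family $\mathcal R$ of pairwise disjoint $X\to Y$ paths inside $H$ with $V_{\text{first}}(\mathcal R)\supseteq A$ and $V_{\text{last}}(\mathcal R)\supseteq B$.

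\textbf{The recursion and its successor step.} I would fix an enumeration $B=\{b_\xi:\xi<\lambda\}$ and build an increasing chain of families $\mathcal R_\xi$ of pairwise disjoint $X\to Y$ paths with $V_{\text{first}}(\mathcal R_\xi)\supseteq A$ and $\{b_\eta:\eta<\xi\}\subseteq V_{\text{last}}(\mathcal R_\xi)$, starting from $\mathcal R_0:=\mathcal P$ and keeping the bookkeeping that each already-secured $b_\eta$ is the last vertex of a distinguished path of $\mathcal R_\xi$ whose tail is a final segment of $Q_{b_\eta}\in\mathcal Q$. At a successor $\xi$, writing $b:=b_\xi$: if $b$ is already a last vertex, do nothing; if $Q_b$ avoids $\bigcup\mathcal R_\xi$, add $Q_b$; otherwise graft a final segment of $Q_b$ at the last vertex $w$ where $Q_b$ meets $\bigcup\mathcal R_\xi$ --- that is, replace the path $R'\ni w$ of $\mathcal R_\xi$ by $R'[\cdot,w]\frown Q_b[w,b]$. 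This keeps disjointness (past $w$ the grafted tail avoids the rest of the family, by the choice of $w$), keeps $V_{\text{first}}\supseteq A$ (the new path starts where $R'$ started) and secures $b$; the only loss is the former last vertex of $R'$. If that vertex is not among the $b_\eta$ with $\eta<\xi$ we are done; otherwise it is some $b_{\eta_1}$ with $\eta_1<\xi$, and we repeat the graft with target $b_{\eta_1}$ along $Q_{b_{\eta_1}}$, and so on. The distinguished-path bookkeeping is designed so that each graft passes its debt only to a target of strictly smaller index; since a strictly decreasing sequence of ordinals is finite, this chain terminates after finitely many grafts, yielding $\mathcal R_{\xi+1}$, which again satisfies the invariants and differs from $\mathcal R_\xi$ only along finitely many edges.

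\textbf{Limit steps, conclusion, and the main obstacle.} At a limit $\xi$ one sets $\mathcal R_\xi:=\bigcup_{\eta<\xi}\mathcal R_\eta$; this is a legitimate family of pairwise disjoint $X\to Y$ paths only if the chain has stabilised edge by edge, so the recursion must be run so that each edge of $H$ changes its membership status only finitely often --- and this is the true difficulty. For finite $D$ a single bounded graft chain suffices and no limit step arises, so everything is routine; the whole weight of the theorem lies in choosing the grafts (equivalently, the alternating blue/red re-routing walks they trace in $H$) so delicately that, simultaneously, each such walk is finite, it never un-covers a previously secured last vertex, it never drops a vertex of $A$, and the cumulative effect over the (possibly uncountably many) successor steps is eventually constant on every edge --- exactly the point at which a naive ``one path at a time'' induction breaks down. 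Granting this, $\mathcal R:=\mathcal R_\lambda$ is a family of pairwise disjoint $X\to Y$ paths with $V_{\text{first}}(\mathcal R)\supseteq A$ and $V_{\text{last}}(\mathcal R)\supseteq B$, as required.
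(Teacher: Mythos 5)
This is a statement the paper does not prove at all: Theorem \ref{Pym} is quoted from Pym's 1969 article and used as a black box, so there is no in-paper argument to measure you against. Judged on its own, your proposal is not a proof. You say so yourself: after setting up the grafting recursion you write ``Granting this, \dots'' about precisely the point where the theorem lives, namely that the re-routing walks can be chosen so that each one is finite and so that every edge changes status only finitely often across the (possibly transfinite) recursion, making the limit stages legitimate. Conceding that step is conceding the theorem; Pym's actual argument (and its modern reformulations, e.g.\ via a fixed point of a monotone ``switching'' map between the two path systems, which produces $\mathcal{R}$ in one shot rather than by securing the $b_\xi$ one at a time) is designed exactly to avoid the edge-by-edge stabilisation problem you run into.

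There is also a concrete error before the limit stages. Your termination argument for a single successor step claims that each graft ``passes its debt only to a target of strictly smaller index.'' Nothing in the construction enforces this. When you graft the tail $Q_b[w,b]$ onto the path $R'$ containing $w$, the displaced vertex is the old endpoint of $R'$, which can be any previously secured $b_\eta$, with $\eta$ in no particular relation to $\xi$; and the next graft, for $Q_{b_\eta}$, may attach to the very path you just rerouted towards $b$ (the two $\mathcal{Q}$-tails are disjoint, but $Q_{b_\eta}$ can meet the retained initial segment $R'[\cdot,w]$), displacing $b$ again. So the chain of grafts can cycle, and even the finite case needs a genuine potential-function or fixed-point argument rather than ``a single bounded graft chain.'' In short: the reduction to $H=\bigcup(\mathcal{P}\cup\mathcal{Q})$ and the idea of alternating re-routings are the right raw material, but both the successor step and the limit step are open as written, and the second of these is the entire content of the theorem.
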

\begin{proof}[Proof of Theorem \ref{main result flame}:]
Let $V=\{ v_n 
\}_{n<\omega} $. We may assume by Lemma \ref{large quasi flame}  that $ D $ 
is a 
quasi-flame.

We construct by recursion a sequence  $ (\mathcal{P}_n)_{n<\omega} $   such that for every $ n<\omega $: 

\begin{enumerate}

\item $ \mathcal{P}_n\in \mathfrak{I}_{D}(v_n)$,
\item $ A_{\text{last}}(\mathcal{P}_n) \supseteq \bigcup_{m<n}\mathsf{in}_{\mathcal{P}_m}(v_n) $,
\item $ \mathsf{in}_{\mathcal{P}_{n}}(v_m)\subseteq A_{\text{last}}(\mathcal{P}_m) $ for $ m<n $.
\end{enumerate}

Let us show first that if the construction is done, then the union $ E $ of the edge sets of the path-systems $ \mathcal{P}_n $ 
form a $ D $-large flame. Indeed, largeness of $ E $ follows immediately from property 1. Properties 2 and 3 state 
together  that 
$ \mathsf{in}_{\mathcal{P}_m}(v_n)\subseteq A_{\text{last}}(\mathcal{P}_n) $ for $ m,n<\omega $  thus  
 $ \mathcal{P}_n $ ensures $ \mathsf{in}_{E}(v_n)\in \mathcal{G}_{E}(v_n) $ which means that $ E $ 
 is a flame. 

Let $ \mathcal{P}_0\in \mathfrak{I}_{D}(v_0)$ be arbitrary. Suppose 
that $ 
\mathcal{P}_m $ is defined for 
$ m<n $ where $ n>0 $ and so 
far the conditions hold. Delete those ingoing edges of $ v_0, v_1,\dots ,v_{n-1} $ from $ D $ that we cannot use in the 
construction of 
$ \mathcal{P}_n $ according to property 3 and let us denote  the remaining digraph by $ D_n $. Since properties 2 and 3 hold 
so far,  for $ \ell, m<n $ we have  $\mathsf{in}_{\mathcal{P}_\ell}(v_m)\subseteq A_{\text{last}}(\mathcal{P}_m) $. 
Therefore 
 $ D_n $ 
contains the path-systems $ 
\mathcal{P}_0,\dots, 
\mathcal{P}_{n-1} $. Thus we may conclude by Lemma \ref{key lemma}  that $ 
D_n $ is  $ D $-large. Hence Lemma  \ref{key lemma1} guarantees that $ D_n $ is a 
quasi-flame. Take a $ \mathcal{P}\in \mathfrak{I}_D(v_n) $ that lies in $ D_n $ and take an $ S $ consisting of choosing 
exactly one internal vertex from each path in $ \mathcal{P}-\{ rv_n \} $ that separates 
$ v_n $ from $ r $  in $ D-rv_n $.  Let $ 
\mathcal{P}' $ 
consist of 
the segments of paths in $ \mathcal{P}-\{ rv_n \} $ from $ S $ to $ N^{\mathsf{in}}_{D-rv}(v_n) $ (see Figure \ref{fig 
mainthm}).  Let us denote 
$ (\bigcup_{m<n}\mathsf{in}_{\mathcal{P}_m}(v_n))-rv_n $ by $ J $. Note that 
$ \left|J\right|\leq n $ since each of $ \mathcal{P}_0,\dots ,\mathcal{P}_{n-1} $ uses at most one ingoing edge of $ v_n 
$. Then $ J\in 
\mathcal{G}_{D_n}(v_n) $ because $ D_n $ is a quasi-flame.  Take a $ \mathcal{Q} $ that 
witnesses $ J\in 
\mathcal{G}_{D_n}(v_n) $ and 
let $ \mathcal{Q}' $ be the set of the segments of the paths in $ \mathcal{Q} $ from the last intersection with 
$ S $ to $ N^{\text{in}}_{D-rv_n}(v_n) $. 

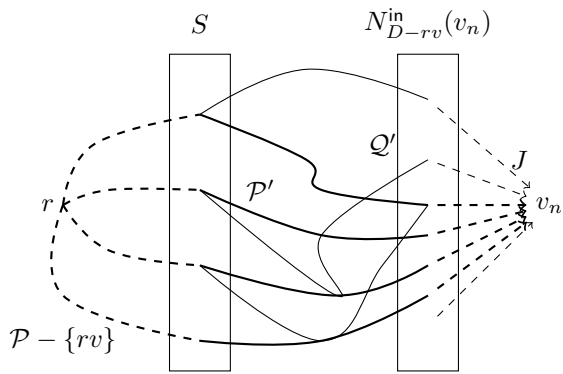
\begin{figure}[H]
\centering
\begin{tikzpicture}

\draw  (-0.6,0.2) rectangle (0.2,-4);
\draw  (2.4,0.2) rectangle (3.2,-4);

\node at (-0.2,0.6) {$S$};

\node at (2.8,0.6) {$N^{\mathsf{in}}_{D-rv}(v_n)$};
\node (v7) at (-2,-1.8) {};
\node (v7') at (-2.2,-1.8) {$ r $};
\node (v2) at (4.4,-1.8) {$v_n$};
\node at (4,-1.2) {$J$};

\node (v1) at (2.8,-0.4) {};
\node (v3) at (2.8,-1.2) {};
\node (v4) at (2.8,-1.8) {};
\node (v5) at (2.8,-2.6) {};
\node (v6) at (2.8,-3.4) {};
\node (v13) at (-0.2,-3.6) {};
\node (v12) at (-0.2,-2.6) {};

\node (v11) at (-0.2,-1.6) {};
\node (v10) at (-0.2,-0.6) {};
\node (v8) at (2.8,-2.2) {};
\node (v9) at (2.8,-3) {};

\draw[->, dashed]   (v1) edge (v2);
\draw[->, dashed]   (v3) edge (v2);
\draw[thick,->, dashed]   (v4) edge (v2);
\draw[thick, ->, dashed]   (v5) edge (v2);
\draw[->, dashed]  (v6) edge (v2);
\draw[thick, ->, dashed]   (v8) edge (v2);
\draw[thick, ->, dashed]   (v9) edge (v2);

\draw[thick]  plot[smooth, tension=.7] coordinates {(v10) (1.2,-1.2) (1.4,-1.6) (v4)};
\draw[thick]  plot[smooth, tension=.7] coordinates {(v11) (1.4,-2.2) (v8)};
\draw[thick]  plot[smooth, tension=.7] coordinates {(v12) (1.6,-3) (v5)};
\draw[thick]  plot[smooth, tension=.7] coordinates {(v13) (1.4,-3.6) (v9)};

\draw  plot[smooth, tension=.7] coordinates {(v10) (1.2,0) (v1)};
\draw  plot[smooth, tension=.7] coordinates {(v11) (1.6,-3) (1.4,-2.2) (v3)};
\draw  plot[smooth, tension=.7] coordinates {(v12) (1.4,-3.6) (2.2,-2.6) (v4)};

\draw[thick,dashed]   plot[smooth, tension=.7] coordinates {(v7) (-1.6,-1) (v10)};
\draw[thick,dashed]  plot[smooth, tension=.7] coordinates {(v7) (-1.4,-1.6) (v11)};
\draw[thick,dashed]  plot[smooth, tension=.7] coordinates {(v7) (-1.4,-2.4) (v12)};
\draw[thick,dashed]  plot[smooth, tension=.7] coordinates {(v7) (-2,-3) (v13)};
\node at (-2,-3.6) {$\mathcal{P}-\{ rv \}$};
\node at (2.2,-1) {$\mathcal{Q}'$};
\node at (0.6,-1.6) {$ \mathcal{P}' $};
\end{tikzpicture}
\caption{The construction of $ \mathcal{P}_n $}\label{mainproof fig}\label{fig mainthm}
\end{figure} 

By applying  Pym's theorem (see Theorem \ref{Pym}) with $ \mathcal{P}' $ and $ \mathcal{Q}' $, we obtain a system $ 
\mathcal{R}' $ of 
disjoint 
$ S\rightarrow N^{\mathsf{in}}_{D-rv}(v_n) $ paths
where $ V_{\text{first}}(\mathcal{R}')=S $ and
$ V_{\text{last}}(\mathcal{R}') $ contains the tails of the edges in $ J $.  We extend  $ \mathcal{R}' $ to a $ v_n $-infan 
 $ \mathcal{R} $  that uses all the edges in $ J $.
Finally we build $ \mathcal{P}_n $ by joining the initial segments of the paths in $ \mathcal{P}-\{ rv \} $ up to $ S $  with 
the 
paths in $ 
\mathcal{R} $ and by adding the path $ \{ rv_n  \}$ if $ rv_n\in D $. The construction ensures that $ 
\mathcal{P}_n\in \mathfrak{I}_D(v_n)  $.
\end{proof}
 
\section{Proof of the lemmas}
\subsection{Preliminaries}\label{ErdMeng sep}
 \begin{thm}[R. Aharoni, E. Berger; Theorem 1.6 of \cite{aharoni2009menger}]\label{inf Menger}
For every digraph $ D $ and $ X,Y\subseteq V $, there is a system $ \mathcal{P} $ of 
disjoint $  X \rightarrow Y $ paths in $ D $  such that one can choose exactly one vertex from each path in $ 
\mathcal{P} $ in such a way that the resulting vertex set $ S $ separates $ Y $ from $ X $ in $ D $. 
\end{thm}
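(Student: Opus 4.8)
This is the Aharoni--Berger theorem --- the resolution of the Erd\H{o}s--Menger conjecture --- and in the present paper it is (rightly) invoked as a black box from \cite{aharoni2009menger}; I would not attempt an independent proof. For orientation, here is the plan of the known argument and the points at which the infinite case genuinely departs from the finite one.

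The plan is to pass to a \emph{web} $\Gamma=(G,A,B)$ and normalise it: split each vertex into an in-copy and an out-copy joined by one arc (so that ``one vertex per path'' becomes ``one arc per path'' and the vertex- and edge-versions coincide), and adjoin fresh vertices so that $A\cap B=\varnothing$, no arc enters $A$, and no arc leaves $B$. The central notion is a \emph{wave}: a system $\mathcal{W}$ of pairwise disjoint paths, each starting in $A$, jointly covering $A$ (with trivial one-vertex paths allowed at the unused vertices of $A$), whose terminal-vertex set $\mathsf{ter}(\mathcal{W})$ separates $B$ from $A$ in $G$. The trivial wave always exists; waves carry a natural partial order under which $\mathcal{W}_{1}\le\mathcal{W}_{2}$ means $\mathcal{W}_{2}$ pushes the front $\mathsf{ter}(\mathcal{W}_{1})$ further toward $B$; and the first key step is to show that every chain of waves has an upper bound, so that Zorn's lemma produces a \emph{maximal} wave $\mathcal{W}^{*}$.

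Next one forms the \emph{residual web} $\Gamma^{*}$ by deleting the interiors of the paths of $\mathcal{W}^{*}$ and taking $\mathsf{ter}(\mathcal{W}^{*})$ as its source set, and shows $\Gamma^{*}$ carries no \emph{hindrance}, i.e.\ no transfinite Hall-type obstruction to linking each of its source vertices into $B$; for a hindrance in $\Gamma^{*}$ could be spliced onto $\mathcal{W}^{*}$ to yield a strictly larger wave, contradicting maximality. One then proves the implication ``unhindered $\Rightarrow$ linkable'': an unhindered web carries a system $\mathcal{L}$ of disjoint paths from its source set to $B$ covering every source vertex, obtained by linking the source vertices one at a time along paths chosen so as to preserve unhinderedness of what remains. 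Finally, concatenating: prolonging each path of $\mathcal{L}$ backward along $\mathcal{W}^{*}$ gives a system $\mathcal{P}$ of pairwise disjoint $A\to B$ paths, and $S:=\mathsf{ter}(\mathcal{W}^{*})$ separates $B$ from $A$ (as $\mathcal{W}^{*}$ is a wave) and meets each path of $\mathcal{P}$ in exactly the single vertex where its $\mathcal{W}^{*}$-part joins its $\mathcal{L}$-part; undoing the vertex-splitting yields the statement for $D$, $X$, $Y$.

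The two genuinely hard points are both transfinite limit arguments. One is the closure of waves under chains: the limit wave is not the naive union, and one must trace backward which vertices persist so that the limiting front is still a separator. The other is ``unhindered $\Rightarrow$ linkable'': linking source vertices one by one forces one to re-route earlier-chosen paths, and the delicate part is showing that at limit ordinals they stabilise to an honest disjoint family. For the countably infinite case actually needed in this paper the statement is older, due to Aharoni, and there the limit steps are tamer --- $\omega$-recursion together with a K\"onig-type compactness/stabilisation argument suffices --- but since the theorem is only ever applied here as a hypothesis, quoting \cite{aharoni2009menger} is the clean choice.
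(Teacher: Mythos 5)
The paper does not prove this statement either: it is quoted verbatim as Theorem 1.6 of \cite{aharoni2009menger} and used as a black box, exactly as you propose. Your decision to cite rather than reprove is the same approach as the paper, and your orientation sketch of the wave/hindrance argument (and the remark that the countable case is older, due to Aharoni) is accurate.
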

For a rooted digraph $ D $ and $ v\in V-r $, let  $ \boldsymbol{\mathfrak{S}_D(v)} 
$  be the set of the 
those $ S\subseteq V\setminus \{ r,v 
\}  $ 
that separates $ v $ from $ r $ in $ D-rv $ and for which $ D $ admits a system $ \mathcal{P} $  of  internally 
disjoint   $ r\rightarrow v $ paths such that $ S $ consists of choosing exactly one internal vertex from  each 
path in $ \mathcal{P} $. We call $ \mathfrak{S}_D(v) $ the set of the \textbf{Erdős-Menger separations} corresponding to 
$ v $ in the rooted digraph $ D $. By applying Theorem \ref{inf Menger} with $ X:=N_{D-rv}^{\text{out}}(r) $ and 
$ Y:=N_{D-rv}^{\text{in}}(v) $ in $ D-rv $,  we conclude that $  \mathfrak{S}_D(v)\neq \varnothing $ for every rooted 
digraph $ D $ and $ v\in V-r $. Note that if  $ S\in \mathfrak{S}_D(v) $ is 
witnessed by $ \mathcal{P} $, then either $ 
\mathcal{P}\in  \mathfrak{I}_D(v) $ or $ \mathcal{P}+\{ rv \}\in \mathfrak{I}_D(v) $ depending on if $ rv\in D $. Therefore 
$  \mathfrak{I}_D(v)\neq \varnothing $. Furthermore,  if $ S\in \mathfrak{S}_G(v)\cap \mathfrak{S}_D(v)  $ where $ G\subseteq D $ then any 
path-system showing
$S\in \mathfrak{S}_G(v)  $ exemplifies $ S\in \mathfrak{S}_D(v) $ as well.
\begin{cor}\label{reform largeness}
The rooted digraph $ L\subseteq D $ is $ D $-vertex-large if and only if $ L\supseteq\mathsf{out}_{D}(r) $ and
$ \mathfrak{S}_L(v)\cap \mathfrak{S}_D(v)\neq \varnothing$ for every $ v\in V-r $.
\end{cor}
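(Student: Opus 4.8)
The plan is to translate both sides of the equivalence into assertions about internally disjoint $r\to v$ path-systems and their associated separations, and then pass between $\mathfrak{I}$ and $\mathfrak{S}$ using the observations collected just before the statement: a witness for $S\in\mathfrak{S}_D(v)$ becomes a member of $\mathfrak{I}_D(v)$ after adjoining $\{rv\}$ precisely when $rv\in D$, and an $\mathfrak{S}$-witness in a subdigraph is still one in the ambient digraph. Accordingly the proof breaks into the two implications, and inside each into the cases $rv\in D$ and $rv\notin D$.

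For the ``only if'' direction, assume $L$ is $D$-vertex-large. I would first note $\mathsf{out}_D(r)\subseteq L$: given $rw\in D$ (so $w\ne r$, as $D$ is rooted), pick $\mathcal P\in\mathfrak{I}_D(w)$ lying in $L$ together with a choice of an internal vertex or an edge from each of its paths whose union $T$ separates $w$ from $r$ in $D$; since $\{rw\}$ is an $r\to w$ path of $D$ it is met by $T$, and because the elements of $T$ are edges of paths of $\mathcal P$ or internal vertices of such paths, and the latter differ from $r$ and $w$, the element of $T$ meeting $\{rw\}$ can only be the edge $rw$ itself, chosen from the path $\{rw\}\in\mathcal P$; hence $rw\in L$. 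Next, fix $v\ne r$, take $\mathcal P\in\mathfrak{I}_D(v)$ in $L$ with separating choice $T$, and set $\mathcal P':=\mathcal P-\{rv\}$. I would turn $T$ into a vertex set $S$ by discarding the element chosen from $\{rv\}$ (if $\{rv\}\in\mathcal P$) and, for each remaining path, replacing a chosen edge by one of its endpoints that is an internal vertex of that path — one such endpoint always exists since these paths have length at least $2$. The verification that $S$ separates $v$ from $r$ in $D-rv$ is short: an $r\to v$ path $R$ in $D-rv$ meets $T$ but not via the edge $rv$, hence via an element coming from some $P\in\mathcal P'$, and in either case ($R$ through a chosen internal vertex, or through a chosen edge and thus through its internal endpoint) $R$ passes through a vertex of $S$. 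Since $\mathcal P'$ lies in $L$ and $L-rv\subseteq D-rv$, the same system $\mathcal P'$ witnesses $S\in\mathfrak{S}_L(v)\cap\mathfrak{S}_D(v)$.

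For the ``if'' direction, assume $\mathsf{out}_D(r)\subseteq L$ and, for each $v\ne r$, fix $S\in\mathfrak{S}_L(v)\cap\mathfrak{S}_D(v)$ together with an $L$-witness $\mathcal P$; each path of $\mathcal P$ has an internal vertex, so $\{rv\}\notin\mathcal P$. If $rv\notin D$ then $D-rv=D$, so $S$ separating $v$ from $r$ in $D$ already makes $\mathcal P\in\mathfrak{I}_D(v)$. If $rv\in D$ then $rv\in L$ by hypothesis, and $\mathcal Q:=\mathcal P+\{rv\}$ is a system of internally disjoint $r\to v$ paths in $L$ for which choosing the $S$-vertex from each path of $\mathcal P$ and the edge $rv$ from $\{rv\}$ gives a set separating $v$ from $r$ in $D$: any $r\to v$ path of $D$ either uses $rv$, meeting $\{rv\}$, or lies in $D-rv$, meeting $S$. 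In both cases $L$ contains a member of $\mathfrak{I}_D(v)$, so $L$ is $D$-vertex-large.

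The only step that is not pure unwinding of definitions is the edge-to-vertex conversion in the ``only if'' direction: one must check that replacing the selected edges of the paths in $\mathcal P'$ by internal endpoints yields a separation in $D-rv$ and not merely in $D$, which is exactly where the distinction between $D$ and $D-rv$, and the availability of an internal endpoint of each selected edge, must be used. Everything else is routine bookkeeping with the definitions of $\mathfrak{I}_D(v)$, $\mathfrak{S}_D(v)$ and $D$-vertex-largeness.
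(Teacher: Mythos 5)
Your proof is correct and follows essentially the same route the paper intends: the paper states this corollary without a separate proof, deriving it from the immediately preceding observations (a witness of $S\in\mathfrak{S}_D(v)$ yields a member of $\mathfrak{I}_D(v)$ after adding $\{rv\}$ when $rv\in D$, an $\mathfrak{S}_L$-witness is also an $\mathfrak{S}_D$-witness, and the remark that the separation for a $\mathcal{P}\in\mathfrak{I}_D(v)$ can be taken as internal vertices plus possibly the edge $rv$). You have merely spelled out those conversions, including the edge-to-internal-vertex replacement and the argument that $\mathsf{out}_D(r)\subseteq L$, all of which check out.
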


Since for a flame $ F\subseteq D $, $ F\cup \mathsf{out}_D(r) $ remains a flame, finding a $ D $-large flame is equivalent 
with 
finding a flame that preserves an Erdős-Menger separation for each $ v\in V-r $. 

 An augmenting walk for  a system $ \mathcal{P} $ of disjoint $ X\rightarrow Y $ paths in a digraph $ D $ is a finite  $ 
 W\subseteq D $ such that the symmetric difference of $ W $ and $ \bigcup \mathcal{P} $ is (the edge set of) a system of 
 disjoint 
 $ X\rightarrow Y $ paths $ \mathcal{Q} $ covering one more vertex from $ X $ and from $ Y $ than $ \mathcal{P} $. The 
 name comes from the fact that if such a $ W $ exist then it is possible to find one as a walk in a certain auxiliary 
 digraph.  
\begin{lem}[Augmenting walk]\label{aug path thm}
Let $ D $ be a digraph and let $ \mathcal{P}$ be a system of disjoint $ X\rightarrow Y $ paths in $ D $ for some $ 
X,Y\subseteq 
V $. There is either an $ XY $-separation $ S $ consisting of exactly one vertex from each path of $ \mathcal{P} 
$ or
there is a system $ \mathcal{Q} $ of disjoint $ X\rightarrow Y $ paths in $ D $  such that 
$ \left|\mathcal{P}\setminus \mathcal{Q}\right|+1= \left|\mathcal{Q}\setminus \mathcal{P}\right|<\aleph_0$, 
$ V_{\text{first}}(\mathcal{Q})\supseteq V_{\text{first}}(\mathcal{P}) $ and $ V_{\text{last}}(\mathcal{Q})\supseteq 
V_{\text{last}}(\mathcal{P}) $.
 \end{lem}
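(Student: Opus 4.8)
The plan is to run the classical alternating/auxiliary-digraph argument, being careful about finiteness since $\mathcal P$ may be infinite. First I would build an auxiliary digraph $H$ on vertex set $V$ whose edges are: every edge of $D$ not lying on a path of $\mathcal P$, taken in its original orientation; and every edge of $\bigcup\mathcal P$, taken \emph{reversed}. I would then consider walks in $H$ that start at an $X$-vertex not covered by $\mathcal P$ (call the set of these $X_0$) and look for one reaching a $Y$-vertex not covered by $\mathcal P$ ($Y_0$). If $X_0=\varnothing$ then $V_{\text{first}}(\mathcal P)=X$ and we are in a degenerate case; similarly if $Y_0=\varnothing$. The key dichotomy is: either some vertex of $Y_0$ is reachable from $X_0$ in $H$, or it is not.

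In the reachable case, I would take a walk $W_0$ in $H$ from $X_0$ to $Y_0$ and, by the standard trick, prune it to obtain a walk whose symmetric difference with $\bigcup\mathcal P$ is a valid path system — one first shortcuts the walk to remove repeated vertices as much as possible, then argues that the finite edge set $W$ obtained satisfies $W\bigtriangleup\bigcup\mathcal P=\bigcup\mathcal Q$ for a system $\mathcal Q$ of disjoint $X\rightarrow Y$ paths covering one extra vertex of $X$ and one extra of $Y$. The finiteness $|\mathcal P\setminus\mathcal Q|+1=|\mathcal Q\setminus\mathcal P|<\aleph_0$ comes from the fact that $W$ is a finite walk and only the finitely many paths of $\mathcal P$ that $W$ touches get modified; the inclusions $V_{\text{first}}(\mathcal Q)\supseteq V_{\text{first}}(\mathcal P)$ and $V_{\text{last}}(\mathcal Q)\supseteq V_{\text{last}}(\mathcal P)$ hold because the modification only \emph{adds} one endpoint on each side and reroutes internally.

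In the non-reachable case, let $R$ be the set of vertices reachable from $X_0$ in $H$ (including $X_0$ itself). I would produce the separator $S$ by taking from each path $P\in\mathcal P$ the last vertex of $P$ lying in $R$ — this is well defined because the first vertex of $P$ that is \emph{not} a covered-$X$ vertex is still, well, we need care: actually the cleaner choice is, for each $P\in\mathcal P$, the vertex where $P$ ``leaves'' $R$, i.e.\ the last $R$-vertex on $P$. One checks this lies in $V\setminus X_0$ automatically and that it exists on every $P$ (the first vertex of $P$, being in $X$, may or may not be in $R$, but for $P\in\mathcal P$ its first vertex \emph{is} covered so lies in $X\setminus X_0$; still it could be in $R$ via an incoming auxiliary edge — that's fine). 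Then I would verify $S$ meets every $X\rightarrow Y$ path of $D$: given such a path $Q$, if its first vertex is in $X_0$ then following $Q$ we must exit $R$ somewhere (its last vertex lies in $Y_0$ or is $\mathcal P$-covered; either way not deep in $R$ by non-reachability), and the edge of $Q$ crossing out of $R$ forces a crossing of some $P\in\mathcal P$ at an $S$-vertex by the definition of $H$; if the first vertex of $Q$ is already covered, a short additional argument using $R$-membership finishes it.

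The main obstacle I anticipate is purely bookkeeping in the non-reachable case: pinning down the exact definition of $S$ so that simultaneously (i) $S$ contains exactly one vertex of each $P\in\mathcal P$, (ii) $S\subseteq V$ really separates $Y$ from $X$ in $D$ (not just in $H$), and (iii) the argument does not secretly assume $\mathcal P$ is finite. Reachability in $H$ and the set $R$ are well behaved regardless of cardinality, so the infinitude of $\mathcal P$ is harmless here; the only genuinely finite object is the augmenting walk $W$ in the other case. I would also need the elementary observation that a walk from $X_0$ to $Y_0$ in $H$ can always be trimmed to one that, after symmetric difference, yields genuine disjoint paths — this is a finite, purely combinatorial lemma about walks and I would state and dispatch it quickly.
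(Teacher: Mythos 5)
Your dichotomy is not the right one, and this is a genuine gap rather than bookkeeping. The auxiliary digraph $H$ you build (non-path edges kept, path edges reversed) is the residual structure for \emph{edge}-disjoint augmentation; it does not encode the vertex-disjointness of $\mathcal{P}$. Concretely, let $\mathcal{P}$ consist of the single path $a\rightarrow v\rightarrow b$ and let $D$ additionally contain the edges $xv$ and $vy$, with $X=\{a,x\}$, $Y=\{b,y\}$. Then $x\in X_0$, $y\in Y_0$, and $y$ is reachable from $x$ in your $H$ via $x\rightarrow v\rightarrow y$, so you are in the ``reachable'' branch; but there is no system of two disjoint $X\rightarrow Y$ paths (every such path uses $v$), so no admissible $\mathcal{Q}$ exists and no amount of shortcutting the walk can make the symmetric difference a disjoint path system. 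The lemma's conclusion holds here only through the separator $S=\{v\}$, which your reachable branch never produces. The fix is standard: either split every vertex $v$ into $v_{\mathrm{in}}v_{\mathrm{out}}$ and run your residual-digraph argument in the split digraph (reducing vertex-disjoint to edge-disjoint), or work with genuine \emph{alternating walks}, i.e.\ impose the extra condition that whenever the walk meets a vertex covered by $\mathcal{P}$ it must use a (reversed) path edge at that vertex; reachability must then be replaced by ``some alternating walk from $X_0$ ends in $Y_0$''. This is exactly the route the paper takes implicitly: it does not reprove the lemma but cites Lemmas 3.3.2 and 3.3.3 (and Theorem 8.4.2) of Diestel's book, where the alternating-walk conditions carry the vertex-disjointness.

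Two smaller points in your separator branch. First, a path $P\in\mathcal{P}$ may be entirely disjoint from your reachable set $R$, in which case ``the last $R$-vertex of $P$'' does not exist; the standard remedy is to take the first vertex of such a $P$ (it lies in $X$ and on $P$, which the statement permits). Second, the degenerate cases $X_0=\varnothing$ or $Y_0=\varnothing$ are not really degenerate failures: there $S=V_{\text{first}}(\mathcal{P})=X$ (respectively $S=V_{\text{last}}(\mathcal{P})=Y$) already is a separator of the required form, so they should simply be dispatched, not set aside. With the corrected notion of augmenting/alternating walk, your remarks about finiteness of the modification and the inclusions $V_{\text{first}}(\mathcal{Q})\supseteq V_{\text{first}}(\mathcal{P})$, $V_{\text{last}}(\mathcal{Q})\supseteq V_{\text{last}}(\mathcal{P})$ are fine, and the infinitude of $\mathcal{P}$ is indeed harmless.
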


For more details about the Augmenting walk lemma and its role in the proof of the Aharoni-Berger theorem we refer to 
Lemmas 3.3.2. and 3.3.3. and
Theorem 8.4.2. of \cite{diestel2016graph} (it is discussed for undirected graphs but the directed case involves no additional 
ideas).

It is worth to mention an alternative characterisation of  $  \mathfrak{I}_D(v) $ (follows from Theorem 4.7 of 
\cite{aharoni2009menger}). A system $ \mathcal{P} $ of internally disjoint $ r\rightarrow v $ paths in $ D $ is called
\textbf{strongly 
maximal} if for every internally disjoint system $ \mathcal{Q} $ of $ r\rightarrow v $ paths in $ D $, 
$ \left|\mathcal{Q}\setminus \mathcal{P}\right|\leq \left|\mathcal{P}\setminus \mathcal{Q}\right| $.
\begin{prop}\label{strong max char}
For every rooted digraph $ D $ and $ v\in V-r $, $  \mathfrak{I}_D(v) $ is  the set of the 
strongly maximal internally disjoint $ r\rightarrow v $ path-systems of $ D $. 
\end{prop}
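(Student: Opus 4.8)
The plan is to establish the two inclusions separately, after reducing to the case $rv\notin D$. If $rv\in D$, then $\{rv\}$ is the unique $r\rightarrow v$ path of $D$ containing no vertex of $V\setminus\{r,v\}$, so any Erdős–Menger separation of $v$ from $r$ must select the edge $rv$, and that edge can only be selected from the path $\{rv\}$; hence $\mathcal{P}\in\mathfrak{I}_D(v)$ forces $\{rv\}\in\mathcal{P}$. Likewise, if $\{rv\}\notin\mathcal{P}$ then $\mathcal{P}+\{rv\}$ is internally disjoint and beats $\mathcal{P}$, so strong maximality also forces $\{rv\}\in\mathcal{P}$. Deleting $\{rv\}$ from the system and the edge $rv$ from $D$ then preserves both sides of the claimed equivalence, so from now on I assume $rv\notin D$; in this case every $r\rightarrow v$ path of $D$ has an internal vertex, $\mathfrak{I}_D(v)$ is precisely the set of internally disjoint $r\rightarrow v$ path-systems that witness some $S\in\mathfrak{S}_D(v)$, and such an $S$ may be taken inside $V\setminus\{r,v\}$.

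For the inclusion ``$\mathcal{P}\in\mathfrak{I}_D(v)\Rightarrow\mathcal{P}$ is strongly maximal'' no Menger-type input is needed. Fix $S\in\mathfrak{S}_D(v)$ witnessed by $\mathcal{P}$, so $S$ consists of exactly one internal vertex $s_P$ of each $P\in\mathcal{P}$ and $S$ separates $v$ from $r$ in $D$. Given any system $\mathcal{Q}$ of internally disjoint $r\rightarrow v$ paths in $D$, every $Q\in\mathcal{Q}$ meets $S$; let $t_Q$ be the first vertex of $Q$ in $S$ and let $\phi(Q)$ be the unique $P\in\mathcal{P}$ with $s_P=t_Q$. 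Since the elements of $S$ are internal vertices of the $\mathcal{Q}$-paths and $\mathcal{Q}$ is internally disjoint, $\phi$ is injective; and if $Q\in\mathcal{Q}\setminus\mathcal{P}$ then $\phi(Q)\notin\mathcal{Q}$, for otherwise $\phi(Q)$ and $Q$ would be distinct members of $\mathcal{Q}$ sharing the internal vertex $s_{\phi(Q)}$. Thus $\phi$ restricts to an injection $\mathcal{Q}\setminus\mathcal{P}\rightarrow\mathcal{P}\setminus\mathcal{Q}$, so $\left|\mathcal{Q}\setminus\mathcal{P}\right|\le\left|\mathcal{P}\setminus\mathcal{Q}\right|$, which is strong maximality of $\mathcal{P}$.

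For the reverse inclusion I would invoke the Augmenting walk lemma (Lemma \ref{aug path thm}) after turning internal disjointness into ordinary disjointness. Put $X:=N^{\text{out}}_{D-rv}(r)$, $Y:=N^{\text{in}}_{D-rv}(v)$ and work in $H:=D-\{r,v\}$; deleting the first and last vertex is a bijection $P\mapsto P^{*}$ from systems of internally disjoint $r\rightarrow v$ paths of $D$ to systems of disjoint paths from $X$ to $Y$ in $H$, under which a set $S\subseteq V\setminus\{r,v\}$ separates $v$ from $r$ in $D$ iff it meets every path from $X$ to $Y$ in $H$, and which preserves the cardinalities $\left|\,\cdot\setminus\cdot\,\right|$. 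Now let $\mathcal{P}$ be strongly maximal with image $\mathcal{P}^{*}$. If $\mathcal{P}^{*}$ had an augmenting walk, the output $\mathcal{Q}^{*}$ of Lemma \ref{aug path thm} would lift to an internally disjoint system $\mathcal{Q}$ in $D$ with $\left|\mathcal{Q}\setminus\mathcal{P}\right|=\left|\mathcal{P}\setminus\mathcal{Q}\right|+1$, contradicting strong maximality. Hence the first alternative of Lemma \ref{aug path thm} holds: there is a set $S$ meeting every path from $X$ to $Y$ in $H$ and consisting of exactly one vertex of each $P^{*}$. Translating back, $S\in\mathfrak{S}_D(v)$ and $\mathcal{P}$ witnesses it, so $\mathcal{P}\in\mathfrak{I}_D(v)$.

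The one place where I expect to spend effort is the translation in the last paragraph: the stated Augmenting walk lemma uses $X\rightarrow Y$ paths in the strict sense (only the first vertex in $X$, only the last in $Y$), whereas the linkages $P^{*}$ above need not be of this form when an out-neighbour of $r$ is also an in-neighbour of $v$ or sits in the interior of another path. This is routine — one applies the lemma in $H$ to the strict $X\rightarrow Y$ subpaths obtained by trimming each $P^{*}$, then checks that a resulting separator still meets each $P^{*}$ in a single vertex and that an augmenting walk for the trimmed family still produces the forbidden $\mathcal{Q}$ — but it is the only step that requires genuine care with the definitions; everything else is either the short injection argument of the second paragraph or an off-the-shelf invocation of Lemma \ref{aug path thm}.
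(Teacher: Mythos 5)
Your proposal is correct and follows essentially the same route as the paper: the same reduction to the case $rv\notin D$, the same separator-counting argument showing every $\mathcal{P}\in\mathfrak{I}_D(v)$ is strongly maximal, and the same application of the Augmenting walk lemma (Lemma \ref{aug path thm}) to the (trimmed) interior segments with $X=N^{\text{out}}_{D}(r)$, $Y=N^{\text{in}}_{D}(v)$ for the converse. The strict-$X\rightarrow Y$-path translation you flag at the end is exactly the step the paper dismisses as ``easy to check''; when you carry it out, lift back to $D$ only the finitely many paths actually touched by the augmenting walk and restore every other member of $\mathcal{P}$ intact, since the naive endpoint-lift of \emph{all} augmented paths need not beat $\mathcal{P}$ when infinitely many interiors fail to be strict $X\rightarrow Y$ paths (both difference sets can then have size $\aleph_0$), whereas the finiteness of the walk makes the restored version yield the required strict inequality.
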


 \begin{proof}
 Assume first $ rv\notin D $.  Let $ \mathcal{P}\in \mathfrak{I}_D(v) $  and pick an $ S 
$ that separates $ v $ from $ r $ and consists of choosing one internal vertex from each path in $ \mathcal{P} $. Let 
$\mathcal{Q}$ be a system of internally disjoint $r\rightarrow v$ paths. Then the 
paths 
$  \mathcal{P}\setminus \mathcal{Q} $ use exactly $   \left|\mathcal{P}\setminus \mathcal{Q}\right| $  vertices from
$ S $  and each path in $ \mathcal{Q}\setminus \mathcal{P} $ goes through  at least one of these vertices. Since the paths $ 
\mathcal{Q}\setminus \mathcal{P} $ are internally disjoint, $ \left|\mathcal{Q}\setminus \mathcal{P}\right|\leq 
\left|\mathcal{P}\setminus \mathcal{Q}\right| $ follows. To show the other direction, let $ \mathcal{P} $ be a strongly 
maximal 
system of internally 
disjoint $ r\rightarrow v $ paths in $ D $. Let $ X:=N_{D}^{\text{out}}(r) $ and 
$ Y:=N_{D}^{\text{in}}(v) $. It is easy to check that the set of the $ X\rightarrow Y $ segments $ \mathcal{P}' $ of the 
paths in $ \mathcal{P} $ form a strongly maximal system of disjoint $ X\rightarrow Y $ paths.  By applying Lemma \ref{aug 
path thm} with $ \mathcal{P}' $, we obtain an $ S $ which exemplifies $ \mathcal{P}\in \mathfrak{I}_D(v) $.

If $ rv\in D $ then $ \mathcal{P} $ is strongly maximal if and only if $ \{ rv \}\in \mathcal{P} $ and $ \mathcal{P}-\{ rv \} $ 
is strongly maximal in $ D-rv $. Since $ \mathcal{P}\in \mathfrak{I}_D(v) $ if and only if $ \{ rv \}\in \mathcal{P} $ and $ 
\mathcal{P}-\{ rv \}\in \mathfrak{I}_{D-rv}(v) $,  we are 
done by applying the proved case in $ D-rv $.
\end{proof}

\subsection{Uniting bubbles}
Let $ D $ be a rooted digraph. The entrance $ \boldsymbol{\mathsf{ent}_D(X)} $ of  an $ X\subseteq V-r $ with respect to $ D 
$ is $ \{ 
 v\in X\, :\, \exists uv\in D\text{ with }u\notin X \} $. We write $ \boldsymbol{\mathsf{int}_D(X)} $ for 
 $ X\setminus \mathsf{ent}_D(X) $. A set $ B\subseteq V-r $ is a $ \boldsymbol{v} $\textbf{-bubble} with respect to $ D $ 
 if  there exists a $ v $-infan 
$ \mathcal{P}=\{ P_{u}: u\in \mathsf{ent}_D(B) \} $  in $ D\cap (B\times B) $ where
$P_u $ starts at $ u $. Let us denote the set of the $ v $-bubbles in $ D $ by $ \boldsymbol{\mathsf{bubb}_D(v)} 
$. Clearly $ \{ v \}\in \mathsf{bubb}_D(v) $ since either the $ v\rightarrow v $ path or the empty set is a witness for it 
depending on if $ 
v\in \mathsf{ent}_D(\{ v \}) $). 
\begin{lem}\label{bubble chain}
Let $ D $ be a rooted digraph and let $ \alpha $ be an ordinal number. Suppose that $ \left\langle B_\beta: \beta<\alpha  
\right\rangle $ is a sequence where 
$ B_\beta\in \mathsf{bubb}_D(v_\beta) $ for some $ v_{\beta}\in V-r $. Let us denote $ \bigcup_{\gamma<\beta}B_\gamma 
$ 
by $ 
\boldsymbol{B_{<\beta}} $.  If 
for each $ \beta<\alpha $ either $ v_\beta=v_0 $ or $ v_\beta\in \mathsf{int}_D \left(B_{<\beta} \right) $, then $ B_{<\alpha}\in 
\mathsf{bubb}_D(v_0) $.  
\end{lem}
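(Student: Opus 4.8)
The plan is to construct, by transfinite recursion on $\beta\le\alpha$, a $v_0$-infan $\mathcal{P}^\beta=\{P^\beta_u:u\in\mathsf{ent}_D(B_{<\beta})\}$ lying in $D\cap(B_{<\beta}\times B_{<\beta})$ which witnesses $B_{<\beta}\in\mathsf{bubb}_D(v_0)$, and to do this \emph{coherently}, meaning that $P^{\beta'}_u=P^\beta_u$ whenever $\beta'\le\beta$ and $u$ is an entrance of both $B_{<\beta'}$ and $B_{<\beta}$; the case $\beta=\alpha$ is the assertion. Two trivial monotonicity facts drive everything: for $X\subseteq X'$, an in-edge witnessing $z\in\mathsf{ent}_D(X')$ with $z\in X$ comes from outside $X$ too, so $\mathsf{ent}_D(X')\cap X\subseteq\mathsf{ent}_D(X)$, and $\mathsf{int}_D(X)\subseteq\mathsf{int}_D(X')$. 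Applied to $B_{<\beta}\subseteq B_{<\beta+1}=B_{<\beta}\cup B_\beta$ and to $B_\beta\subseteq B_{<\beta+1}$, they show that $\mathsf{ent}_D(B_{<\beta+1})$ is the disjoint union of an ``old'' part contained in $B_{<\beta}$ (each such vertex already lies in $\mathsf{ent}_D(B_{<\beta})$) and a ``new'' part contained in $B_\beta\setminus B_{<\beta}$ (each such vertex already lies in $\mathsf{ent}_D(B_\beta)$).

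The limit step is routine bookkeeping. If $\lambda$ is a limit and $u\in\mathsf{ent}_D(B_{<\lambda})$, then $u$ enters some $B_\delta$ with $\delta<\lambda$, and a witnessing in-edge of $u$ from outside $B_{<\lambda}$ lies outside every $B_{<\beta}$ with $\delta<\beta<\lambda$, so $u\in\mathsf{ent}_D(B_{<\beta})$ for all such $\beta$; by coherence the paths $P^\beta_u$ (for $\delta<\beta<\lambda$) all coincide, and I define $P^\lambda_u$ to be this common path. Any two of the resulting paths already appear together in a single $\mathcal{P}^\beta$ with $\beta<\lambda$, hence are internally disjoint, and each lies in some $D\cap(B_{<\beta}\times B_{<\beta})\subseteq D\cap(B_{<\lambda}\times B_{<\lambda})$; coherence through $\lambda$ holds by construction.

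The successor step from $B_{<\beta}$, with infan $\mathcal{P}^\beta$, to $B_{<\beta+1}=B_{<\beta}\cup B_\beta$ is the substance of the proof. Put $w:=v_\beta$ and let $\mathcal{Q}=\{Q_u:u\in\mathsf{ent}_D(B_\beta)\}$ be the given $w$-infan of $B_\beta$; by hypothesis $w=v_0$ or $w\in\mathsf{int}_D(B_{<\beta})$. For an old entrance $u$ of $B_{<\beta+1}$ I set $P^{\beta+1}_u:=P^\beta_u$, which also makes coherence automatic. For a new entrance $u$ I walk along $Q_u$ from $u$ toward $w$ and let $a_u$ be the first vertex of $Q_u$ lying in $B_{<\beta}$; it exists because $Q_u$ ends at $w\in B_{<\beta}$, except in the degenerate situation $w=v_0\notin B_{<\beta}$, which forces $\mathsf{ent}_D(B_{<\beta})=\varnothing$, $\mathcal{P}^\beta=\varnothing$, and where I simply put $P^{\beta+1}_u:=Q_u$. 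The predecessor of $a_u$ on $Q_u$ is outside $B_{<\beta}$, so $a_u\in\mathsf{ent}_D(B_{<\beta})$ and $P^\beta_{a_u}$ is available; I let $P^{\beta+1}_u$ be the initial segment of $Q_u$ up to $a_u$ followed by $P^\beta_{a_u}$, a genuine $u\to v_0$ path in $D\cap(B_{<\beta+1}\times B_{<\beta+1})$.

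The remaining task, where I expect the actual difficulty, is to verify that $\mathcal{P}^{\beta+1}$ is internally disjoint. This reduces to two observations. First, if $w\in\mathsf{int}_D(B_{<\beta})$ then $a_u\ne w$ for every new $u$ (otherwise the predecessor-of-$a_u$ argument would put $w$ in $\mathsf{ent}_D(B_{<\beta})$), hence the used segments of the $Q_u$ for distinct new $u$ avoid the common terminal $w$ of $\mathcal{Q}$, are pairwise vertex-disjoint, and the $a_u$ are pairwise distinct; when $w=v_0$ the same conclusions hold with $v_0$ in place of $w$, the only overlaps being at the common terminal $v_0$, where the relevant $P^\beta_{a_u}$ is the one-vertex path $\{v_0\}$. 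Second, no $a_u$ equals an old entrance $u'$: since $a_u\in B_\beta$ is an internal vertex of $Q_u$, it is not a first vertex of $\mathcal{Q}$, so $a_u\notin\mathsf{ent}_D(B_\beta)$, whereas $u'\in B_\beta\cap\mathsf{ent}_D(B_{<\beta+1})$ forces $u'\in\mathsf{ent}_D(B_\beta)$. Granting these, the used $Q_u$-segments lie in $B_\beta\setminus B_{<\beta}$ apart from their endpoints $a_u\in B_{<\beta}$, the $P^\beta$-pieces and all old paths lie in $B_{<\beta}$, and $\mathcal{P}^\beta$ is internally disjoint, so every remaining possible intersection of two paths of $\mathcal{P}^{\beta+1}$ is forced to be contained in $\{v_0\}$ or is excluded. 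The degenerate cases ($B_\beta$ with empty entrance, when $\mathcal{P}^{\beta+1}$ is merely a subfamily of $\mathcal{P}^\beta$; or $v_0$ not yet in $B_{<\beta}$) are handled by the same formulas and create no conflicts; the genuine obstacle is isolating the two observations above and seeing that they close the disjointness check.
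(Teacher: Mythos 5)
Your proof is correct and takes essentially the same route as the paper's: a transfinite recursion that keeps every previously built path, and at each successor step truncates the new bubble's infan at its first meeting point with $B_{<\beta}$ and splices it onto the already constructed paths, using the same observation that a coincidence of first-meeting vertices forces $v_\beta=v_0$. Your explicit coherence bookkeeping, the disjointness case analysis, and the treatment of the degenerate case $v_0\notin B_{<\beta}$ (empty entrance) are just more detailed versions of what the paper's argument does implicitly.
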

\begin{sbiz}
For every  $u\in \bigcup_{1\leq \beta \leq \alpha} \mathsf{ent}_D(B_{<\beta}) $, 
we 
construct a  $u\rightarrow v_0  $ 
path $ P_u $ in such a way that for each $ \beta  $ the paths $ \{ P_u\, : \, u\in \mathsf{ent}_D(B_{<\beta})   \} $ exemplify 
$ 
B_{<\beta}\in 
\mathsf{bubb}_D(v_0) $.

Let $ \{ P_u\, : \, u\in \mathsf{ent}_D(B_{<1})   \}$ be an arbitrary path-system witnessing $ B_{0}\in 
\mathsf{bubb}_D(v_0)  $. Suppose that $ \beta>1 $ and $ P_u $ is defined whenever there is a $ \gamma<\beta $ for which 
$ u\in \mathsf{ent}_D(B_{<\gamma}) $. If $ \beta $ is a limit ordinal then  $ P_u $ is defined for 
$ u\in \mathsf{ent}_D(B_{<\beta}) $ as well and the conditions hold. 

Assume that $ 
\beta=\gamma+1 $.  Fix a path-system $ \mathcal{Q}=\{ Q_u: u\in \mathsf{ent}_D(B_\gamma) \} $ which shows $ 
B_\gamma\in 
\mathsf{bubb}_D(v_\gamma) $. 
Note that $ \mathsf{ent}_D(B_{<\gamma+1})\setminus\mathsf{ent}_D(B_{<\gamma})  \subseteq 
\mathsf{ent}_D(B_\gamma) $ (see Figure \ref{unite bubb fig}). Since $ v_\gamma\in \{ v_0 \}\cup 
\mathsf{int}_D(B_{<\gamma}) $,  all the 
paths in $ 
\mathcal{Q} $ meet $ B_{<\gamma} $. Furthermore, if two paths in $ \mathcal{Q} $ have the same 
vertex  as first meeting with $ B_{<\gamma} $, then  it must be $ v_\gamma $ and hence $ v_\gamma=v_0 $ holds  since 
in 
this case 
$ v_\gamma \notin \mathsf{int}_D(B_{<\gamma})  $. For $ u\in 
\mathsf{ent}_D(B_{<\gamma+1})\setminus\mathsf{ent}_D(B_{<\gamma}) $, consider the initial 
segment $ Q_u' $ of $ Q_u $ up 
to the first 
 vertex $ w $ that is in $ B_{<\gamma} $. Join $ Q_u' $ and $ P_w $ to obtain $ P_u $.
\end{sbiz}
\begin{figure}[H]
\centering
\begin{tikzpicture}

\draw  (-3.5,1) node (v1) {} rectangle (2.5,-1.5);
\draw  (v1) rectangle (2.5,0.5);

\draw[dashed]   (2.5,1) rectangle (0,3) node (v2) {};
\draw[dashed]  (v2) rectangle (2.5,2.5);

\node (v7) at (-2.6,0.8) {};
\node (v9) at (-1,0.8) {};
\node (v4) at (0.2,0.8) {};
\node (v6) at (1.2,0.8) {$w$};
\node (v10) at (2,0.8) {};
\node (v8) at (-0.6,-1) {$v_0$};
\node (v3) at (0.6,2.8) {};
\node (v5) at (1.8,2.8) {$u$};

\node at (1.5,1.6) {$Q'_u$};
\node at (0.5,-0.2) {$P_w$};

\node at (-4.8,-0.4) {$B_{<\gamma}$};
\node at (-4.4,0.6) {$\mathsf{ent}_D(B_{<\gamma})$};
\node at (-1.8,2) {$B_{<\gamma+1}\setminus B_{<\gamma}$};
\node at (-2.2,2.8) {$\mathsf{ent}_D(B_{<\gamma+1})\setminus \mathsf{ent}_D(B_{<\gamma})$};
\draw  plot[smooth, tension=.7] coordinates {(v3) (0.8,2.2) (0.4,1.6) (v4)};
\draw  plot[smooth, tension=.7] coordinates {(v5) (1.6,2) (1.2,1.8) (1,1.4) (v6)};

\draw[thick]  plot[smooth, tension=.7] coordinates {(v7) (-2.4,-0.4) (-1.8,-0.6) (-1,-0.8) (v8)};
\draw[thick]  plot[smooth, tension=.7] coordinates {(v9)};
\draw[thick]  plot[smooth, tension=.7] coordinates {(v9) (-1,0) (v8)};
\draw[thick]  plot[smooth, tension=.7] coordinates {(v4) (0,-0.2) (v8)};
\draw[thick]  plot[smooth, tension=.7] coordinates {(v6)};
\draw[thick]  plot[smooth, tension=.7] coordinates {(v6) (0.6,-0.6) (v8)};
\draw[thick]  plot[smooth, tension=.7] coordinates {(v10) (1.4,-0.2) (1,-1) (v8)};
\end{tikzpicture}
\caption{The construction of the path-system witnessing  $ B_{<\gamma+1}\in \mathsf{bubb}_D(v_0) $. (Vertex $ v_0 $ 
can be in  $  \mathsf{ent}_D(B_{<\gamma}) $)}\label{unite bubb fig}
\end{figure}
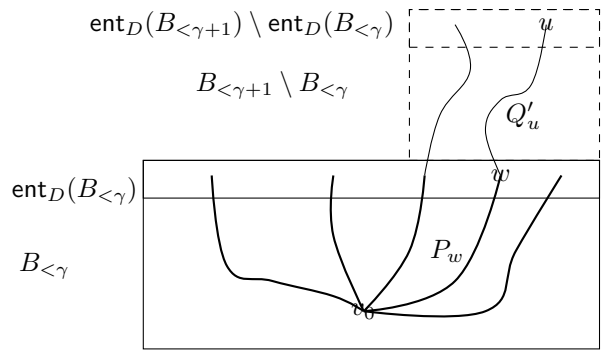 
\begin{cor}\label{bubbles closed union}
For every rooted digraph $ D $ and $ v\in V-r $, $\mathsf{bubb}_D(v) $ is closed under arbitrary large 
union.
\end{cor}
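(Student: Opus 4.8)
The plan is to reduce this immediately to Lemma~\ref{bubble chain}. Given an arbitrary family $\mathcal{B}\subseteq \mathsf{bubb}_D(v)$, I would fix a well-ordering of $\mathcal{B}$ and list it as $\langle B_\beta : \beta<\alpha \rangle$, where $\alpha$ is the order type of $\mathcal{B}$, and then simply set $v_\beta:=v$ for every $\beta<\alpha$. Since every member of $\mathcal{B}$ is a $v$-bubble we have $B_\beta\in \mathsf{bubb}_D(v_\beta)$ for each $\beta$, so the sequence $\langle B_\beta : \beta<\alpha\rangle$ together with the vertices $v_\beta$ is exactly of the shape required in the hypothesis of Lemma~\ref{bubble chain}.

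The key observation is that here the hypothesis of Lemma~\ref{bubble chain}---that for each $\beta<\alpha$ either $v_\beta=v_0$ or $v_\beta\in\mathsf{int}_D(B_{<\beta})$---holds for a trivial reason: all the $v_\beta$ coincide, so in particular $v_\beta=v_0$ for every $\beta<\alpha$, and the first alternative always applies; the interior condition is never needed. Lemma~\ref{bubble chain} then yields $\bigcup\mathcal{B}=B_{<\alpha}\in\mathsf{bubb}_D(v_0)=\mathsf{bubb}_D(v)$, which is precisely the assertion.

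The only case not covered by the argument above is $\mathcal{B}=\varnothing$, where $\alpha=0$, $v_0$ is undefined, and Lemma~\ref{bubble chain} does not formally apply; but then $\bigcup\mathcal{B}=\varnothing$, and $\varnothing\in\mathsf{bubb}_D(v)$ holds directly from the definition, with the empty $v$-infan as a witness since $\mathsf{ent}_D(\varnothing)=\varnothing$. I do not expect any genuine obstacle in this corollary: all of the combinatorial content was already discharged in Lemma~\ref{bubble chain}, and the only point to notice is that a family of $v$-bubbles for one \emph{fixed} root-like vertex $v$ automatically satisfies the chain condition, so closure under arbitrarily large unions drops out at once.
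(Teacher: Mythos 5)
Your proposal is correct and is exactly the argument the paper intends: Corollary~\ref{bubbles closed union} is stated as an immediate consequence of Lemma~\ref{bubble chain}, obtained by well-ordering the family and taking $v_\beta=v$ for all $\beta$, so the hypothesis holds via the alternative $v_\beta=v_0$ throughout. Your handling of the empty family is a harmless extra remark; nothing further is needed.
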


\begin{cor}\label{bigest bubble}
For every rooted digraph $ D $ and  $ v\in V-r $, there is a $ \subseteq $-largest element of $ \mathsf{bubb}_D(v) $, 
namely $ \bigcup \mathsf{bubb}_D(v)=:\boldsymbol{B_{v,D}} $.  
\end{cor}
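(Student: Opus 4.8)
The plan is to deduce this immediately from Corollary \ref{bubbles closed union} by applying it to the family $\mathsf{bubb}_D(v)$ itself. First I would note that $\mathsf{bubb}_D(v)$ is a nonempty family of subsets of $V-r$: the paragraph preceding Lemma \ref{bubble chain} already records that $\{v\}\in\mathsf{bubb}_D(v)$, witnessed either by the trivial $v\rightarrow v$ path (if $v\in\mathsf{ent}_D(\{v\})$) or by the empty infan (otherwise). Since $\mathsf{bubb}_D(v)\subseteq\mathcal{P}(V-r)$ is a set, its union $\bigcup\mathsf{bubb}_D(v)$ is a well-defined subset of $V-r$, and Corollary \ref{bubbles closed union} — closure of $\mathsf{bubb}_D(v)$ under arbitrary large union — applied to the whole family yields $\bigcup\mathsf{bubb}_D(v)\in\mathsf{bubb}_D(v)$.

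It then remains only to observe that this element is $\subseteq$-largest: by the definition of union, every $B\in\mathsf{bubb}_D(v)$ satisfies $B\subseteq\bigcup\mathsf{bubb}_D(v)$, so setting $B_{v,D}:=\bigcup\mathsf{bubb}_D(v)$ gives the desired maximal $v$-bubble. There is no real obstacle here; the content was already extracted in Lemma \ref{bubble chain} (taking the sequence to consist of $v$-bubbles for the fixed vertex $v_0=v$, so that the hypothesis ``$v_\beta=v_0$ or $v_\beta\in\mathsf{int}_D(B_{<\beta})$'' holds trivially) and in Corollary \ref{bubbles closed union}. The only point worth making explicit is that Corollary \ref{bubbles closed union} already covers \emph{unstructured} set-indexed unions rather than merely well-ordered chains, which is precisely what licenses passing to $\bigcup\mathsf{bubb}_D(v)$ in one step.
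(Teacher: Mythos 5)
Your proposal is correct and matches the paper's intent exactly: the paper states Corollary \ref{bigest bubble} without proof as an immediate consequence of Corollary \ref{bubbles closed union} (itself obtained from Lemma \ref{bubble chain} with all $v_\beta=v_0=v$, exactly as you note), applied to the whole family $\mathsf{bubb}_D(v)$, which is nonempty since $\{v\}\in\mathsf{bubb}_D(v)$. Nothing further is needed.
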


\subsection{Preserving largeness}

For $ S\in \mathfrak{S}_D(v) $, we denote by $ \boldsymbol{B_{S,v,D}} $ the 
set of those  $ u\in V $ for which every 
$ r\rightarrow u $ path in  $ D-rv $ meets $ S $.
\begin{prop}\label{B_CE hatara}
Assume that  $ D $ is a rooted digraph, $ v\in V-r$ and $ S\in \mathfrak{S}_D(v)$. Then $ B_{S,v,D}\in 
\mathsf{bubb}_D(v) $ with
 $ \mathsf{ent}_{D-rv}(B_{S,v,D})=S $ and $ N^{\text{in}}_{D-rv}(v) \subseteq B_{S,v,D}$.
\end{prop}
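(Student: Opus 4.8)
The plan is to extract everything from one witnessing object. Fix a system $\mathcal{P}$ of internally disjoint $r\to v$ paths in $D$ together with a choice of one internal vertex per path whose collection is $S$, and for $s\in S$ let $P^{(s)}\in\mathcal{P}$ be the (unique) path having $s$ as its chosen internal vertex. Two facts will be used repeatedly: $v\in B_{S,v,D}$ while $r\notin B_{S,v,D}$ (the former since $S$ separates $v$ from $r$ in $D-rv$, the latter since the trivial path $\{r\}$ misses $S\subseteq V\setminus\{r,v\}$); and no $P^{(s)}$ uses the edge $rv$, since otherwise $P^{(s)}$ would be $\{rv\}$ and carry no internal vertex, so in fact every $P^{(s)}$ lies in $D-rv$.

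First I would check $N^{\text{in}}_{D-rv}(v)\subseteq B_{S,v,D}$. For an in-neighbour $w$ of $v$ in $D-rv$ the case $w\in S$ is immediate, and if $w\notin S$ had an $r\to w$ path $Q$ in $D-rv$ missing $S$, then $v\notin V(Q)$ (else an initial segment of $Q$ would be an $r\to v$ path in $D-rv$ missing $S$), so $Q+wv$ would be such a path anyway, contradicting $S\in\mathfrak{S}_D(v)$.

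Next I would prove $\mathsf{ent}_{D-rv}(B_{S,v,D})=S$. For ``$\supseteq$'', given $s\in S$ let $u$ be its predecessor on $P^{(s)}$: then $us\in D-rv$ (as $s\neq v$) and the initial segment of $P^{(s)}$ up to $u$ is an $r\to u$ path in $D-rv$ missing $S$ (the only $S$-vertex on $P^{(s)}$ is $s$), so $u\notin B_{S,v,D}$, while $s\in B_{S,v,D}$; hence $s\in\mathsf{ent}_{D-rv}(B_{S,v,D})$. For ``$\subseteq$'', if $w\in\mathsf{ent}_{D-rv}(B_{S,v,D})$, pick $uw\in D-rv$ with $u\notin B_{S,v,D}$ and an $r\to u$ path $Q$ in $D-rv$ missing $S$; then $w\notin V(Q)$ (else an initial segment contradicts $w\in B_{S,v,D}$), so $Q+uw$ is an $r\to w$ path in $D-rv$, and since $w\in B_{S,v,D}$ it meets $S$, forcing $w\in S$.

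Finally, for $B_{S,v,D}\in\mathsf{bubb}_D(v)$ I would first note that the only edge of $D$ absent from $D-rv$ is $rv$ and that $r\notin B_{S,v,D}\ni v$, whence $\mathsf{ent}_D(B_{S,v,D})$ is $S$ if $rv\notin D$ and $S\cup\{v\}$ otherwise. The candidate $v$-infan is then the family of terminal segments $P_s$ of $P^{(s)}$ running from $s$ to $v$ ($s\in S$), with the trivial path $\{v\}$ adjoined when $rv\in D$; this is indexed exactly by $\mathsf{ent}_D(B_{S,v,D})$, each $P_s$ starts at $s$, and it is a genuine $v$-infan because the members of $\mathcal{P}$ meet only in $r$ and $v$ while the $P_s$ avoid $r$. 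The one substantial point — the step I expect to be the main obstacle — is that each $P_s$ lies inside $B_{S,v,D}\times B_{S,v,D}$, i.e. every vertex $x$ of $P_s$ is in $B_{S,v,D}$. If some $r\to x$ path $Q$ in $D-rv$ missed $S$, then $x\neq s$ (a path ending at $s\in S$ cannot miss $S$), so $x$ lies strictly past $s$ on $P^{(s)}$, the sub-path $R$ of $P^{(s)}$ from $x$ to $v$ misses $S$ and lies in $D-rv$, and concatenating the walk $Q$ with $R$ and extracting a path would give an $r\to v$ path in $D-rv$ missing $S$ — contradicting $S\in\mathfrak{S}_D(v)$. This yields the claim, and hence the proposition.
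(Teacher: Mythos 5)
Your proof is correct and follows essentially the same route as the paper: both work from a path-system $\mathcal{P}$ witnessing $S\in\mathfrak{S}_D(v)$, show $S\subseteq B_{S,v,D}\not\ni r$, identify $\mathsf{ent}_{D-rv}(B_{S,v,D})$ with $S$, and take the terminal segments of the paths from $S$ (plus the trivial path at $v$ when $rv\in D$) as the witnessing $v$-infan. You merely spell out a few steps the paper leaves implicit (that the segments stay inside $B_{S,v,D}$, and the $\mathsf{ent}_D$ versus $\mathsf{ent}_{D-rv}$ bookkeeping), and you prove $S\subseteq\mathsf{ent}_{D-rv}(B_{S,v,D})$ directly via predecessors rather than by the paper's contradiction argument — an inessential variation.
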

\begin{sbiz}
 The inclusions
 $  B_{S,v,D} \supseteq S\supseteq\mathsf{ent}_{D-rv}(B_{S,v,D}) $  are clear from the definition of $ B_{S,v,D} $ as well 
 as $ r\notin  B_{S,v,D} $ and $ v\in B_{S,v,D} $. Suppose for a 
  contradiction
  that 
  there is a $ u\in S \setminus \mathsf{ent}_{D-rv}(B_{S,v,D})  $. By 
 the choice of 
  $ S $,  there is a system $ \mathcal{P} $ of internally disjoint $ r\rightarrow v $ paths in 
 $ D-rv $ for which $ S $ consists of choosing one internal vertex from each path in $ \mathcal{P} $. 
 The unique path $ P_u\in \mathcal{P} $ which goes through $ u $ enters $ B_{S,v,D} $ and therefore meets $ 
 \mathsf{ent}_{D-rv}(B_{S,v,D}) $. But then $ P_u $ has at least two vertices in $ S $ since $ 
 S\supseteq\mathsf{ent}_{D-rv}(B_{S,v,D}) $ which is a contradiction. Hence $ S=\mathsf{ent}_{D-rv}(B_{S,v,D}) $. 
 The terminal segments  of the paths in $ \mathcal{P} $  from $ S $  witnessing $ B_{S,v,D}\in 
 \mathsf{bubb}_D(v) $. Finally if $ w\in N^{\text{in}}_{D-rv}(v) \setminus B_{S,v,D} $ then $ w $ is reachable from 
 $ r $ without touching $ S $ and hence $ v $ as well (since $ v\notin S $ ) which is impossible.
\end{sbiz}

\begin{prop}\label{nagybubi hatar}
For every rooted digraph $ D $ and $ v\in V-r $,  \[ \mathsf{ent}_{D-rv}(B_{v,D})\in 
\mathfrak{S}_D(v). \]
\end{prop}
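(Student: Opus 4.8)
The plan is to put $S:=\mathsf{ent}_{D-rv}(B_{v,D})$ and verify the three clauses in the definition of $\mathfrak{S}_D(v)$. Two of them form the easy half. Bubbles avoid $r$, so $r\notin B_{v,D}$ and hence $r\notin S$. Moreover, for $u\neq v$ every edge $uv\in D-rv$ makes $\{u,v\}$ a $v$-bubble (witnessed by the path $\{uv\}$ together with the trivial path at $v$), so $u\in B_{v,D}$; thus $v$ has no ingoing edge in $D-rv$ originating outside $B_{v,D}$. In particular $v\notin S$, and every $r\rightarrow v$ path of $D-rv$ enters $B_{v,D}$ (running from $r\notin B_{v,D}$ to $v\in B_{v,D}$), its first vertex in $B_{v,D}$ — which cannot be $v$, by the previous sentence — lying in $\mathsf{ent}_{D-rv}(B_{v,D})=S$. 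So $S\subseteq V\setminus\{r,v\}$ and $S$ separates $v$ from $r$ in $D-rv$.

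For the remaining clause I would reduce to a fan problem. Write $B:=B_{v,D}$ and fix a $v$-infan $\mathcal{Q}=\{Q_u:u\in\mathsf{ent}_D(B)\}$ inside $D\cap(B\times B)$ witnessing $B\in\mathsf{bubb}_D(v)$; note $S\subseteq\mathsf{ent}_D(B)$. It is enough to exhibit an $r$-fan $\{R_s:s\in S\}$ in $D-rv$ with $R_s$ running from $r$ to $s$ and meeting $B$ only in $s$: then the concatenations $\{R_s\cdot Q_s:s\in S\}$ are internally disjoint $r\rightarrow v$ paths whose chosen internal vertices form exactly $S$ (distinct $R_s$'s meet only in $r$, distinct $Q_s$'s only in $v$, and $R_s$ meets $Q_{s'}$ at most in $s$, which is not a vertex of $Q_{s'}$ by the infan property). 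To obtain the fan, work in the auxiliary digraph $\hat{G}$ with vertex set $(V\setminus B)\cup S$ whose edges are the edges of $D-rv$ inside $V\setminus B$ together with those from $V\setminus B$ into $S$; there the vertices of $S$ are sinks, so any $r\rightarrow s$ path of $\hat{G}$ meets $B$ only in $s$. Apply Theorem \ref{inf Menger} in $\hat{G}$ with $X:=N^{\mathrm{out}}_{\hat{G}}(r)$ and $Y:=S$ to get a system $\mathcal{P}$ of disjoint $X\rightarrow S$ paths and a set $T$ — one vertex on each path of $\mathcal{P}$ — separating $S$ from $X$ in $\hat{G}$. Since $D$ is rooted, $r$ has no ingoing edge, so $r$ lies on no path of $\mathcal{P}$ and prepending $r$ yields an $r$-fan onto $V_{\mathrm{last}}(\mathcal{P})\subseteq S$; everything hinges on proving $V_{\mathrm{last}}(\mathcal{P})=S$.

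Here is where I would use the maximality of $B=B_{v,D}$. Assume some $s_0\in S$ is uncovered. Then $s_0\notin X$ (otherwise the one-vertex path $\{s_0\}$ is an $X\rightarrow S$ path disjoint from $\mathcal{P}$, hence meets $T$, hence lies on a path of $\mathcal{P}$, a contradiction), so $rs_0\notin D$ and $s_0$ has an in-neighbour $w_0\in V\setminus B$ with $w_0\neq r$ and $w_0s_0\in D-rv$. Set $B'':=\{u\in V(\hat{G})\setminus\{r\}:\text{every path of }\hat{G}\text{ from }X\text{ to }u\text{ meets }T\}$. Using that $T$ separates $S$ from $X$ one checks: $S\subseteq B''$; $w_0\in B''$ (a $T$-avoiding path from $X$ to $w_0$ would extend along $w_0s_0$ to a $T$-avoiding path from $X$ to $s_0$); $\mathsf{ent}_{\hat{G}}(B'')\subseteq T$; and, arguing as in Proposition \ref{B_CE hatara}, for every $P\in\mathcal{P}$ the segment of $P$ from its $T$-vertex to its endpoint in $S$ stays in $B''$. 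Prepending those segments to the corresponding members of $\mathcal{Q}$ (and adjoining the trivial path $\{v\}$ if $rv\in D$) gives a $v$-infan indexed by $\mathsf{ent}_D(B\cup B'')\subseteq T\cup\{v\}$ inside $D\cap\big((B\cup B'')\times(B\cup B'')\big)$, so $B\cup B''\in\mathsf{bubb}_D(v)$; but $w_0\in(B\cup B'')\setminus B$, contradicting that $B=\bigcup\mathsf{bubb}_D(v)$ is the $\subseteq$-largest $v$-bubble (Corollary \ref{bigest bubble}). Hence $V_{\mathrm{last}}(\mathcal{P})=S$, the fan exists, and the reduction above finishes the proof.

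The main obstacle is exactly this maximality step. In the easy half one only needs that single edges span $v$-bubbles, but nothing forces a Menger path-system aimed at $S$ to reach all of $S$, and the only leverage is that any vertex of $S$ left uncovered — together with everything lying behind the Menger separator $T$ in $\hat{G}$ — can be attached to $B_{v,D}$ to enlarge it. Making this precise (keeping $r$ out of $B''$, which is where rootedness enters; identifying $\mathsf{ent}_D(B\cup B'')$; and checking that the reassembled segments really form a $v$-infan indexed by that entrance) is the delicate part.
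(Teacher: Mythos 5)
Your proof is correct, and its skeleton coincides with the paper's: reduce the statement to producing an $r$-fan onto $S=\mathsf{ent}_{D-rv}(B_{v,D})$ (to be joined with the infan witnessing $B_{v,D}\in\mathsf{bubb}_D(v)$), obtain a candidate fan from a single application of Theorem \ref{inf Menger}, and use the maximality of $B_{v,D}$ (Corollary \ref{bigest bubble}) to show that the resulting Erd\H{o}s--Menger separation cannot leave any vertex of $S$ unreached. The difference is in how that last step is executed. The paper applies Theorem \ref{inf Menger} directly in $D-rv$ with $Y=\mathsf{ent}_{D-rv}(B_{v,D})$, observes that the resulting separation $T$ belongs to $\mathfrak{S}_D(v)$, and then concludes $T=\mathsf{ent}_{D-rv}(B_{v,D})$ in a few lines from Proposition \ref{B_CE hatara} via the inclusion $B_{T,v,D}\subseteq B_{v,D}$. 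Your auxiliary digraph $\hat G$, the set $B''$ and the explicit infan for $B\cup B''$ re-derive by hand exactly what $B_{T,v,D}$ together with Proposition \ref{B_CE hatara} delivers for free ($B''$ is the analogue of $B_{T,v,D}$ inside $\hat G$), so your maximality step is longer but sound: $\mathsf{ent}_D(B\cup B'')\subseteq T\cup\{v\}$ holds (including the case of an entrance vertex $z\in B''\setminus B$ with in-neighbour $r$, where $z\in X$ and the trivial path $\{z\}$ forces $z\in T$), the concatenated $\hat G$-segments and bubble paths do form a $v$-infan because a segment meets $B$ only in its last vertex of $S$, which cannot lie on another infan path, and $w_0\in B''\setminus B$ then contradicts Corollary \ref{bigest bubble}. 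Your treatment of the easy half is in fact more elementary than the paper's: you get $v\notin S$ and the separation property from single-edge bubbles $\{u,v\}$, whereas the paper invokes Proposition \ref{B_CE hatara} together with $\mathfrak{S}_D(v)\neq\varnothing$. In short, the paper's route buys brevity by reusing Proposition \ref{B_CE hatara}; yours buys an argument in which the bubble-enlargement mechanism behind that proposition is made explicit, at the cost of the auxiliary-digraph bookkeeping.
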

\begin{sbiz}
Let $ S\in \mathfrak{S}_D(v) $ be arbitrary.  It follows from Proposition \ref{B_CE 
hatara} that $ B_{v,D}\supseteq S $. Thus $ \mathsf{ent}_{D-rv}(B_{v,D}) $ separates $ S $ from $ r $ in $ D-rv $ and 
therefore it
separates $ v $ from $ r $ in $ D-rv $ as well. It remains to show a system $ \{ P_u: u\in  \mathsf{ent}_{D-rv}(B_{v,D})\} $ of internally disjoint $ 
r\rightarrow v $ paths where $ P_u $ goes through $ u $.  Since $ B_{v,D}\in 
\mathsf{bubb}_D(v) $, it is enough to prove that there is an $ r $-fan $ \{ Q_u\, :\ u\in \mathsf{ent}_{D-rv}(B_{v,D}) \} $  in $ 
D-rv $ where $ Q_u $ terminates at $ u $. To do so, apply the Aharoni-Berger theorem (Theorem 
\ref{inf Menger}) in $ D-rv $ with $ X:=N^{\text{out}}_{D-rv}(r) $ and $ Y:=\mathsf{ent}_{D-rv}(B_{v,D}) $ (see Figure 
\ref{fig entrance and u} without the paths). Observe that 
the resulting separation 
$ T $ is in $ \mathfrak{S}_D(v) $. By Proposition \ref{B_CE hatara}, $ B_{T,v,D}\in 
\mathsf{bubb}_D(v) $, which implies 
$ B_{T,v,D}\subseteq B_{v,D} $. It follows that $ T\subseteq \mathsf{ent}_{D-rv}(B_{v,D}) $. Suppose for a contradiction 
that 
$ w\in \mathsf{ent}_{D-rv}(B_{v,D})\setminus T $. Since $ w\in \mathsf{ent}_{D-rv}(B_{v,D}) $, we can pick a $ uw 
\in D-rv$ with $ u\notin B_{v,D} $. Since 
$ T $ separates $ w \in  \mathsf{ent}_{D-rv}(B_{v,D}) $ from $ X  $ in $ D-rv $ and $ w\notin T $, we 
may conclude that $ T $ 
separates $ u $ from $ X  $ in $ D-rv $ as well. But then $ u\in B_{T,v,D}\subseteq B_{v,D} $ 
contradicts
$ u\notin B_{v,D} $. 
\end{sbiz}
\begin{lem}[Characterisation of largeness]\label{char of largeness}
Let  $ L\subseteq D $ be rooted digraphs.  
Then $ L$ is $ D $-vertex-large if and 
only if  $ u\in B_{v,L} $
for every $ uv\in D\setminus L$. 
Furthermore, if $ L $  is $ D $-vertex-large and $ v\in V-r $, then
$ \mathsf{ent}_{D-rv}(B_{v,L})=\mathsf{ent}_{L-rv}(B_{v,L})\in  \mathfrak{S}_D(v) $.
\end{lem}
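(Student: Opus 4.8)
The plan is to route everything through Corollary~\ref{reform largeness}, which says that $L\subseteq D$ is $D$-vertex-large exactly when $\mathsf{out}_D(r)\subseteq L$ and $\mathfrak{S}_L(v)\cap\mathfrak{S}_D(v)\neq\varnothing$ for every $v\in V-r$, and to convert the edge condition ``$u\in B_{v,L}$ for every $uv\in D\setminus L$'' into statements about the maximal bubbles $B_{v,L}$ using Propositions~\ref{B_CE hatara} and~\ref{nagybubi hatar}, Lemma~\ref{bubble chain} and Corollary~\ref{bigest bubble}.

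For the implication from largeness to the edge condition, assume $L$ is $D$-vertex-large and fix $uv\in D\setminus L$. By Corollary~\ref{reform largeness} we have $\mathsf{out}_D(r)\subseteq L$, so $u\neq r$ and $uv$ is an edge of $D-rv$, and there is some $S\in\mathfrak{S}_L(v)\cap\mathfrak{S}_D(v)$. Applying Proposition~\ref{B_CE hatara} in $D$, $u\in N^{\text{in}}_{D-rv}(v)\subseteq B_{S,v,D}$, i.e.\ every $r\to u$ path of $D-rv$ meets $S$; a fortiori so does every $r\to u$ path of $L-rv$, whence $u\in B_{S,v,L}$. Applying Proposition~\ref{B_CE hatara} in $L$ gives $B_{S,v,L}\in\mathsf{bubb}_L(v)$, so $B_{S,v,L}\subseteq B_{v,L}$ by maximality and $u\in B_{v,L}$.

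Before the converse I would establish, assuming ``$u\in B_{v,L}$ for every $uv\in D\setminus L$'', the identity $\mathsf{ent}_{D-rv}(B_{v,L})=\mathsf{ent}_{L-rv}(B_{v,L})$ — which is also the first half of the final clause. The inclusion $\supseteq$ is automatic. For $\subseteq$, take $u\in\mathsf{ent}_{D-rv}(B_{v,L})$, witnessed by an edge $xu\in D-rv$ with $x\notin B_{v,L}$ and $u\in B_{v,L}$. If $xu\in L$ then $u\in\mathsf{ent}_{L-rv}(B_{v,L})$ immediately. If $xu\in D\setminus L$, the hypothesis yields $x\in B_{u,L}$, and $u\neq v$ (otherwise $x\in B_{v,L}$). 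If moreover $u\in\mathsf{int}_{L-rv}(B_{v,L})$, then since $L$ and $L-rv$ can differ only in the edge $rv$, whose head $v$ is $\neq u$, also $u\in\mathsf{int}_L(B_{v,L})$; so Lemma~\ref{bubble chain} applies to the two-term sequence $\bigl\langle B_{v,L},B_{u,L}\bigr\rangle$ and gives $B_{v,L}\cup B_{u,L}\in\mathsf{bubb}_L(v)$, whence $x\in B_{u,L}\subseteq B_{v,L}$ — a contradiction. Hence $u\in\mathsf{ent}_{L-rv}(B_{v,L})$ in all cases.

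For the converse, assume ``$u\in B_{v,L}$ for every $uv\in D\setminus L$''. Then $\mathsf{out}_D(r)\subseteq L$ because $r\notin B_{v,L}$, and it remains to produce, for each $v\in V-r$, a set in $\mathfrak{S}_L(v)\cap\mathfrak{S}_D(v)$; I claim $S:=\mathsf{ent}_{L-rv}(B_{v,L})$ works. Proposition~\ref{nagybubi hatar} gives $S\in\mathfrak{S}_L(v)$, so $S\subseteq V\setminus\{r,v\}$ and some internally disjoint $r\to v$ path-system $\mathcal{P}$ in $L$ meets $S$ in exactly one internal vertex per path; it only remains to see that $S$ separates $v$ from $r$ in $D-rv$, for then $\mathcal{P}\subseteq L\subseteq D$ witnesses $S\in\mathfrak{S}_D(v)$. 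If some $r\to v$ path $P$ of $D-rv$ avoided $S$, then, as $r\notin B_{v,L}$ while $v\in B_{v,L}$, $P$ would have a first vertex $w$ in $B_{v,L}$, preceded on $P$ by some $x\notin B_{v,L}$; the edge $xw$ then shows $w\in\mathsf{ent}_{D-rv}(B_{v,L})=S$ by the entrance identity, contradicting that $P$ misses $S$. This proves $L$ is $D$-vertex-large, and the ``furthermore'' clause follows at once: if $L$ is $D$-vertex-large then by the first implication the hypothesis of the converse holds, so the entrance identity applies and the common value $\mathsf{ent}_{D-rv}(B_{v,L})=\mathsf{ent}_{L-rv}(B_{v,L})$ is the set $S\in\mathfrak{S}_D(v)$ produced above. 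The main obstacle, I expect, is the bubble-transitivity step in the entrance identity: one has to arrange exactly the hypotheses of Lemma~\ref{bubble chain}, in particular verifying $u\in\mathsf{int}_L(B_{v,L})$ (not merely $u\in\mathsf{int}_{L-rv}(B_{v,L})$) and $u\neq v$, both of which rest on $rv$ being the only edge in which $L$ and $L-rv$ can differ.
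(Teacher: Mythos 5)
Your proof is correct and follows essentially the same route as the paper: Corollary~\ref{reform largeness} plus Proposition~\ref{B_CE hatara} for the forward direction, and Proposition~\ref{nagybubi hatar} together with Lemma~\ref{bubble chain} and the maximality of $B_{v,L}$ for the converse and the ``furthermore'' clause. The only difference is organizational — you isolate the entrance identity $\mathsf{ent}_{D-rv}(B_{v,L})=\mathsf{ent}_{L-rv}(B_{v,L})$ as a separate step (handling the $u\neq v$ and $\mathsf{int}_{L-rv}$ versus $\mathsf{int}_L$ points explicitly, which the paper leaves implicit), where the paper runs the same bubble-chain contradiction directly against the assumption $\mathsf{ent}_{L-rv}(B_{v,L})\notin\mathfrak{S}_D(v)$.
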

\begin{sbiz}
Assume that $ L $ is $ D $-large and let $ uv\in D\setminus L $. By applying the reformulation of largeness in Corollary \ref{reform largeness} we 
know that $ u\neq r $ and there is some $ S\in \mathfrak{S}_L(v)\cap \mathfrak{S}_D(v)  $. Proposition \ref{B_CE hatara} guarantees $ u\in 
B_{S,v,D}$. Since $ L\subseteq D $, it follows directly from the definition that  $ B_{S,v,D}\subseteq B_{S,v,L} $.
By combining these,  we obtain $  u\in 
B_{S,v,D}\subseteq B_{S,v,L}\subseteq B_{v,L} $.
 
For the ``if'' direction, take an arbitrary $ v\in V-r 
$. 
If $ rv\in D $, then $ rv\in L $ because $ r\in B_{v,L} $ is impossible by 
the definition of bubbles. By 
Proposition \ref{nagybubi hatar}, we know that $ \mathsf{ent}_{L-rv}(B_{v,L})\in \mathfrak{S}_L(v)  $. Suppose for 
contradiction that $ \mathsf{ent}_{L-rv}(B_{v,L})\notin \mathfrak{S}_D(v) $.  The only possible reason 
for this is that $ \mathsf{ent}_{L-rv}(B_{v,L}) $ does not separate $ v $ 
from $ r $ in $ D-rv $ (just in $ L-rv $).
It follows that there is a $ w\in \mathsf{ent}_{D-rv}(B_{v,L})\setminus \mathsf{ent}_{L-rv}(B_{v,L}) $ witnessed by some 
edge $ uw\in D\setminus L $.  Then $ w\in \mathsf{int}_{L-rv}(B_{v,L}) $ and by assumption $ u\in B_{w,L} $. Hence by 
Lemma \ref{bubble chain}, $ B_{v,L}\cup B_{w,L}\in \mathsf{bubb}_L(v) $ which is a contradiction because $ u $ 
witnesses 
 $B_{v,L}\subsetneq (B_{v,L}\cup B_{w,L}) $.
\end{sbiz}
\begin{proof}[Proof of Lemma \ref{key lemma}]
To show the largeness of $ L $, we use the characterization of largeness in Lemma \ref{char of largeness}. Let  $ uv\in 
D\setminus L $ be arbitrary. By  assumption there is a $ \mathcal{P}\in 
\mathfrak{I}_D(v) $ 
that lies in $ 
L $. We pick an $ S $ witnessing $ \mathcal{P}\in 
\mathfrak{I}_D(v)  $. By Proposition \ref{B_CE hatara},  $ u\in B_{S,v,D}$. Therefore $u\in B_{S,v,D}\subseteq B_{S,v,L}\subseteq B_{v,L}  $.
Since $ uv\in D\setminus L $ was arbitrary,  we may conclude by Lemma \ref{char of largeness} that $ L $ is large.
\end{proof}
\subsection{A ``largeness-faithful'' quasi-flame}
\begin{claim}\label{one more path}
For every rooted digraph $ D $,  $ v\in V-r $ and $ u\in (V\setminus B_{v,D})-r $, there is an $ r $-fan $ \mathcal{P} $  in $ D-rv 
$ with $ V_{\text{last}}(\mathcal{P})=\mathsf{ent}_{D-rv}(B_{v,D})+u $.
\end{claim}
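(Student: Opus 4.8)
The plan is to produce the required $r$-fan by starting from an $r$-fan onto the separation $S:=\mathsf{ent}_{D-rv}(B_{v,D})$ and pushing one extra path out to $u$ with the Augmenting walk lemma, the crux being to show that the only bad alternative offered by that lemma --- an obstructing separator --- cannot occur, because of the maximality of the bubble $B_{v,D}$. First I would assemble the ingredients. Put $X:=N^{\text{out}}_{D-rv}(r)$. By Proposition~\ref{nagybubi hatar} we have $S\in\mathfrak S_D(v)$, and its proof in fact produces an $r$-fan $\{Q_s:s\in S\}$ in $D-rv$ with $Q_s$ ending at $s$, coming from a system of disjoint $X\rightarrow S$ paths supplied by the Aharoni-Berger theorem (Theorem~\ref{inf Menger}); since an $X\rightarrow S$ path meets $S$ only in its last vertex, the interior of each $Q_s$ stays outside $B_{v,D}$, in particular misses $v$. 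By Corollary~\ref{bigest bubble} there is a $v$-infan witnessing $B_{v,D}\in\mathsf{bubb}_D(v)$, which in particular contains, for each $s\in S$, a path $R_s$ from $s$ to $v$ inside $B_{v,D}$, the $R_s$ pairwise meeting only in $v$; note $Q_s$ and $R_s$ meet only in $s$, so $Q_s\cup R_s$ is an $r\rightarrow v$ path.

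Next, let $\mathcal P_0$ be the system of disjoint $X\rightarrow S+u$ paths in $D-rv$ obtained from the $Q_s$ by deleting $r$ and, on the at most one path passing through $u$, cutting it off at $u$. Then $V_{\text{last}}(\mathcal P_0)\subseteq S+u$ has exactly one element less than $S+u$. Apply the Augmenting walk lemma (Lemma~\ref{aug path thm}) to $\mathcal P_0$ with $Y:=S+u$. In the augmenting alternative we obtain a system $\mathcal Q$ of disjoint $X\rightarrow S+u$ paths with $\left|\mathcal Q\right|=\left|\mathcal P_0\right|+1$ and $V_{\text{last}}(\mathcal Q)\supseteq V_{\text{last}}(\mathcal P_0)$; since the paths of $\mathcal Q$ are disjoint and land in $S+u$, this forces $V_{\text{last}}(\mathcal Q)=S+u$, and prefixing the edge $rx$ to the path of $\mathcal Q$ starting at $x$ yields the desired $r$-fan in $D-rv$ with last vertices $S+u$. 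So everything reduces to excluding the other alternative: a set $T$ that separates $X$ from $S+u$ in $D-rv$ and meets each path of $\mathcal P_0$ in exactly one vertex. Such a $T$ satisfies $T\subseteq V\setminus\{r,v\}$; it separates $r$ from $S$ in $D-rv$, hence, combined with the fact that $S$ separates $v$ from $r$ in $D-rv$, it separates $v$ from $r$ in $D-rv$; and restoring on each path of $\mathcal P_0$ the prefix $r$ and the deleted tail and then appending $R_s$ exhibits $T$ as a choice of one internal vertex from each path of the internally disjoint $r\rightarrow v$ path-system $\{Q_s\cup R_s:s\in S\}$. Therefore $T\in\mathfrak S_D(v)$, so $B_{T,v,D}\in\mathsf{bubb}_D(v)$ by Proposition~\ref{B_CE hatara} and hence $B_{T,v,D}\subseteq B_{v,D}$. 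But $T$ separates $r$ from $u$ in $D-rv$, so $u\in B_{T,v,D}\subseteq B_{v,D}$, contradicting the hypothesis on $u$.

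The step I expect to be the real obstacle is the verification that the separator $T$ coming out of the Augmenting walk lemma is an Erdős-Menger separation corresponding to $v$ --- that $T\subseteq V\setminus\{r,v\}$, that $T$ separates $v$ from $r$ in $D-rv$, and, most delicately, that $T$ is realized as one internal vertex per path of some internally disjoint $r\rightarrow v$ path-system --- since it is precisely this that lets the maximality of $B_{v,D}$ be invoked to reach the contradiction. The remaining points, such as keeping track of the single path of $\mathcal P_0$ that might run through $u$, checking that prefixing $r$ really yields an $r$-fan (which uses that $r$ has no ingoing edge), and the degenerate cases where some $X\rightarrow S+u$ subpath is a single vertex, should be routine.
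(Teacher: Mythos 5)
Your proposal is correct and follows essentially the same route as the paper's proof: take the $r$-fan onto $\mathsf{ent}_{D-rv}(B_{v,D})$ coming from Proposition \ref{nagybubi hatar}, truncate at $u$ if necessary, apply the Augmenting walk lemma with $Y=\mathsf{ent}_{D-rv}(B_{v,D})+u$, and rule out the separating alternative because such a separator lies in $\mathfrak{S}_D(v)$ and its bubble $B_{T,v,D}$ would contain $u$, contradicting the maximality of $B_{v,D}$ via Proposition \ref{B_CE hatara}. In fact you spell out (via the bubble's $v$-infan) the step the paper only gestures at, namely that the paths of the truncated fan extend to an internally disjoint $r\rightarrow v$ system so that $T\in\mathfrak{S}_D(v)$.
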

\begin{sbiz}
It follows from Proposition \ref{nagybubi hatar} that there is an $ r $-fan $ \mathcal{Q} $ with $ 
V_{\text{last}}(\mathcal{Q})=\mathsf{ent}_{D-rv}(B_{v,D}) $. 
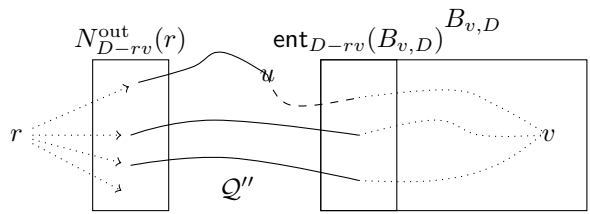
\begin{figure}[H]
\centering
\begin{tikzpicture}

\node (v3) at (-2.5,0) {$r$};
\draw  (-1.5,1) rectangle (-0.5,-1);
\draw  (1.5,1) node (v1) {} rectangle (5,-1);
\draw  (v1) rectangle (2.5,-1);
\node (v2) at (4.5,0) {$v$};
\draw[dotted]  plot[smooth, dotted,  tension=.7] coordinates {(2,0.5) (3.2,0.6) (3.8,0.4) (v2)};
\draw[dotted]   plot[smooth, tension=.7] coordinates {(2,0) (3,0.2) (3.6,0) (4.4,0)};
\draw[dotted]   plot[smooth, tension=.7] coordinates {(2,-0.6) (3,-0.6) (3.8,-0.4) (4.4,0)};

\node (v4) at (-0.9,0.7) {};
\node (v5) at (-1,0) {};
\node (v6) at (-1,-0.4) {};
\node (v7) at (-1,-0.8) {};
\draw  (v3) edge[dotted, ->] (v4);
\draw  (v3) edge[dotted, ->] (v5);
\draw  (v3) edge[dotted, ->] (v6);
\draw  (v3) edge[dotted, ->] (v7);

\node at (-1,1.25) {$N^{\text{out}}_{D-rv}(r)$};
\node at (2,1.25) {$\mathsf{ent}_{D-rv}(B_{v,D})$};
\node at (3.5,1.5) {$B_{v,D}$};

\node (v8) at (0.8,0.8) {$u$};

\draw[dashed]  plot[smooth, tension=.7] coordinates {(v8) (1.1,0.4) (2,0.5)};
\draw  plot[smooth, tension=.7] coordinates {(v4) (-0.2,0.9) (0.2,1.1) (v8)};
\draw  plot[smooth, tension=.7] coordinates {(v5) (0.1,0.2) (2,0)};
\draw  plot[smooth, tension=.7] coordinates {(v6) (0.3,-0.3) (2,-0.6)};
\node at (0.4,-0.7) {$ \mathcal{Q}'' $};
\end{tikzpicture}
\caption{The construction of $ \mathcal{P} $}\label{fig entrance and u}
\end{figure} 
Let $ \mathcal{Q}':=\mathcal{Q} $ if $ \mathcal{Q} $ does 
not meet $ u $ otherwise replace the unique $ Q\in \mathcal{Q} $ through $ u $ by its initial segment up to $ u $ to obtain 
$ 
\mathcal{Q}' $ from $ \mathcal{Q} $. Either way  $ \mathcal{Q}' $ is an $ r $-fan such that $ V_{\text{last}}(\mathcal{Q}') 
$ 
consists of all but one element of  $  \mathsf{ent}_{D-rv}(B_{v,D})+u=:Y $ and the paths $ \mathcal{Q}' $ can be extended 
forward to get a internally disjoint system of $ r\rightarrow v $ paths. Let $ X:=N^{\text{out}}_{D-rv}(r) $ and let $ 
\mathcal{Q}'' $ consist of the  $ X\rightarrow Y $  (terminal) segments of the paths $ \mathcal{Q}' $. We apply  the
Augmenting walk method (Lemma \ref{aug path thm}) with $ \mathcal{Q}'' $ in $ D-rv $. If the augmentation is possible 
then the resulting path-system together with the corresponding edges from $ r $ is appropriate for $ \mathcal{P} $. Suppose 
for a contradiction that the
augmentation is impossible.Then we obtain an $ S\in \mathfrak{S}_D(v) $ that separates $ 
\mathsf{ent}_{D-rv}(B_{v,D})+u $ from 
$ N^{\text{out}}_{D-rv}(r) $
in $ D-rv $.  By the choice of $ u $, $ B_{S,v,D}\supsetneq B_{v,D} $ which is a contradiction.
\end{sbiz}

\begin{lem}\label{berak coloop edge}
Let $ D\supseteq G\supseteq H $ be rooted digraphs and let $ v\in V-r $. Suppose that there is a
$ uw\in D\setminus G $ with $ u\notin B_{v,H} $ and $ w\in \mathsf{int}_{H-rv}(B_{v,H}) $. If
$ \mathsf{ent}_{H-rv}(B_{v,H})= \mathsf{ent}_{G-rv}(B_{v,H}) $, then 
$ (I+uw)\in \mathcal{G}_{G+uw}(w) $ for all $ I\in \mathcal{G}_{G}(w) $.
\end{lem}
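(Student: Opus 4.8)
The plan is to take a system $\mathcal{P}$ of internally disjoint $r\rightarrow w$ paths in $G$ realising $I$ (that is, $A_{\text{last}}(\mathcal{P})=I$), and to manufacture out of it, inside $G+uw$, a new such system whose last edges are exactly $I+uw$; the extra path will be an $r\rightarrow u$ path followed by the new edge $uw$, and room for it will be made by rerouting $\mathcal{P}$ through a fresh $r$-fan supplied by Claim \ref{one more path}. I would first clear away the degenerate case $u=r$: then $uw=rw\notin G$ and the system $\mathcal{P}+\{rw\}$ already does the job, the one-edge path $\{rw\}$ having no internal vertex. So assume $u\neq r$, and put $S:=\mathsf{ent}_{H-rv}(B_{v,H})$. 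Unwinding the hypothesis $\mathsf{ent}_{H-rv}(B_{v,H})=\mathsf{ent}_{G-rv}(B_{v,H})$ one finds that $S=\mathsf{ent}_{G-rv}(B_{v,H})$ and $w\in\mathsf{int}_{G-rv}(B_{v,H})$; in particular, if $w\neq v$ then $rw\notin G$, since $rw\in G-rv$ would put $w$ into $\mathsf{ent}_{G-rv}(B_{v,H})$. Hence a path equal to $\{rw\}$ can occur in $\mathcal{P}$ only when $w=v$, and I would set such a path aside, to be reinstated unchanged at the end.

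Next I would invoke Claim \ref{one more path} in the rooted digraph $H$, with $v$ and $u\in (V\setminus B_{v,H})-r$: this gives an $r$-fan $\mathcal{F}=\{F_x:x\in S+u\}$ in $H-rv$ with $F_x$ terminating at $x$. Because $\mathcal{F}$ is an $r$-fan and any $r$-path of $H-rv$ that reaches $B_{v,H}$ must first hit $\mathsf{ent}_{H-rv}(B_{v,H})=S$, it is automatic that $F_u$ avoids $B_{v,H}$ completely and that each $F_x$ with $x\in S$ meets $B_{v,H}$ only at its endpoint $x$; in particular none of these paths runs through $w$. Now for the reassembly: for each $P$ in $\mathcal{P}_1:=\mathcal{P}\setminus\{\{rw\}\}$, let $P''$ be the last maximal segment of $P$ lying inside $B_{v,H}$ (it ends at $w$) and let $x_P$ be its first vertex. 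The edge of $P$ entering $x_P$ comes from outside $B_{v,H}$, so either $x_P\in\mathsf{ent}_{G-rv}(B_{v,H})=S$, or that edge is $rv$ and then $x_P=v$, $P=\{rv\}\cup P''$, and there is at most one such $P$ (only one path can traverse $rv$). I would then take $\mathcal{P}'$ to consist of the paths $F_{x_P}\cup P''$ (the prefix $F_{x_P}$ followed by the $B_{v,H}$-segment $P''$) for all $P\in\mathcal{P}_1$ with $x_P\in S$, the path $P$ itself for the at most one $P\in\mathcal{P}_1$ with $x_P=v$, the path $F_u+uw$, and the set-aside $\{rw\}$ if there was one. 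Each listed object is genuinely an $r\rightarrow w$ path of $G+uw$ — $F_{x_P}\cup P''$ because $F_{x_P}$ touches the interior of $B_{v,H}$ only at $x_P$, and $F_u+uw$ because $F_u$ misses $w$ — and the last edges accumulated are $(I\setminus\{rw\})\cup\{uw\}\cup\{rw\}=I+uw$, using $uw\notin I$ (as $uw\notin G$).

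The substantive step, and the one I expect to cost the most care, is verifying that $\mathcal{P}'$ is internally disjoint. The key observations are that the vertices $x_P$ (for $P\in\mathcal{P}_1$, $x_P\in S$) are pairwise distinct — two of them could coincide only inside $\{r,w\}$, which is disjoint from $S$ — so the chosen prefixes $F_{x_P}$, together with $F_u$, are pairwise disjoint off $r$; and that the pieces $P''$, the kept whole paths, and $\{rw\}$ all live in $B_{v,H}$, where distinct paths of $\mathcal{P}$ meet only in $\{r,w\}$, whereas the prefixes $F_{x_P}$ meet $B_{v,H}$ only in $\{x_P\}$ and $F_u$ not at all. Going through the few kinds of pairs (prefix against prefix, prefix against an inside piece, inside piece against inside piece, and everything against $F_u+uw$ and against $\{rw\}$) then shows that any two members of $\mathcal{P}'$ share exactly $\{r,w\}$, so $\mathcal{P}'$ witnesses $(I+uw)\in\mathcal{G}_{G+uw}(w)$. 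The conceptual engine is that, thanks to the entrance hypothesis, $B_{v,H}$ is entered in $G-rv$ only through $S$, which is precisely what lets the outside of $\mathcal{P}$ be swapped for parts of $\mathcal{F}$ without disturbing the portions inside $B_{v,H}$; the traps to beware of are exactly the special cases $u=r$ and $w=v$ and the single path that might enter $B_{v,H}$ via the edge $rv$, each dealt with above.
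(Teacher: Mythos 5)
Your proposal is correct and follows essentially the same route as the paper: apply Claim \ref{one more path} in $H$ to get an $r$-fan onto $\mathsf{ent}_{H-rv}(B_{v,H})+u$ (valid in $G-rv$ by the entrance hypothesis), then splice these prefixes onto the terminal segments inside $B_{v,H}$ of a system witnessing $I\in\mathcal{G}_G(w)$ and add $F_u+uw$. The only differences are bookkeeping: you splice at the last entry into $B_{v,H}$ rather than at the last meeting with the entrance, and you treat explicitly the corner cases (a path $\{rw\}$ when $w=v$, and the at most one path entering $B_{v,H}$ via the edge $rv$) that the paper dispatches via the reduction to $rw\notin I$ and leaves implicit.
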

\begin{sbiz}
The statement is trivial for $ u=r $ thus let $ u\neq r $. We may assume that $ rw\notin I $ otherwise we apply the Lemma first with $ I-rw $ and then 
adding  $ rw 
$ cannot ruin anything . By Claim \ref{one more path}, there is an $ r $-fan $ 
\mathcal{P} $  
  in $ H-rv $ with $ V_{\text{last}}(\mathcal{P})=\mathsf{ent}_{H-rv}(B_{v,H})+u $. Then $ 
\mathcal{P} $ is an $ r $-fan 
 in $ G-rv $ for which $ V_{\text{last}}(\mathcal{P})=\mathsf{ent}_{G-rv}(B_{v,H})+u $ because
$ G \supseteq H $ and $ \mathsf{ent}_{H-rv}(B_{v,H})= \mathsf{ent}_{G-rv}(B_{v,H}) $ by assumption. Let $ \mathcal{Q} 
$ be a path-system witnessing $ I\in \mathcal{G}_G(w) $. Continue forward the paths in $ \mathcal{P} $ (see Figure 
\ref{single-edge extension}) using the terminal 
segments $ \mathcal{Q}' $ of the paths in $ \mathcal{Q} $ from the last intersection with $ \mathsf{ent}_{G-rv}(B_{v,H})  
$ and edge $ uw 
$ to obtain a path-system
witnessing $ (I+uw)\in \mathcal{G}_{G+uw}(w) $.
\end{sbiz}
\begin{figure}[H]
\centering
\begin{tikzpicture}

\draw  (-0.5,-1) node (v1) {} rectangle (3.5,-4.5);
\draw  (v1) rectangle (0,-3);
\node (v6) at (-0.2,-1.4) {};
\node (v3) at (-0.2,-1.8) {};
\node (v4) at (-0.2,-2.2) {};
\node (v2) at (-0.2,-2.6) {};
\node (v8) at (-1.2,-4) {$u$};
\node (v5) at (0,-3.8) {$w$};
\node (v7) at (-2.6,-2.6) {};
\node (v11) at (-2.9,-2.6) {$ r $};
\draw  (v8) edge[->,dashed] (v5);
\node (v9) at (-1.8,-1.2) {$ \mathsf{ent}_{G-rv}(B_{v,H}) $};
\node (v10) at (1.5,-0.8) {$ B_{v,H}$};

\node (v12) at (-2,-3.8) {$ \mathcal{P} $};
\node (v13) at (1.5,-3) {$ \mathcal{Q}' $};

\draw  plot[smooth, tension=.7] coordinates {(v2) (0.4,-2.6) (0.8,-3.2) (0.4,-3.6) (v5)};

\draw plot[smooth, tension=.7] coordinates {(v3) (0.8,-2) (1.4,-2.2) (2,-3) (1.6,-3.4) (0.8,-4) (v5)};

\draw  plot[smooth, tension=.7] coordinates {(v6) (1,-1.6) (2.2,-1.6) (2.6,-2) (2.6,-3) (2.2,-3.8) (1.6,-4.2) (0.2,-4.2) (v5)};

\draw[thick]    plot[smooth, tension=.7] coordinates {(v7) (-2,-2) (-1.2,-1.6) (v6)};

\draw[thick]    plot[smooth, tension=.7] coordinates {(v7) (-1.4,-2.4) (-1,-1.8) (v3)};

\draw[thick]    plot[smooth, tension=.7] coordinates {(v7) (-2,-3) (-1.4,-3) (-1,-2.4) (v4)};

\draw[thick]    plot[smooth, tension=.7] coordinates {(v7) (-2.6,-3.2) (-2.2,-3.4) (-1.6,-3.6) (-1.2,-3.2) (v2)};

\draw[thick]    plot[smooth, tension=.7] coordinates {(v7) (-3,-3.2) (-2.6,-3.8) (-2,-4.2) (v8)};
\end{tikzpicture}
\caption{Extending $ G $ by $ uw $. }\label{single-edge extension}
\end{figure}
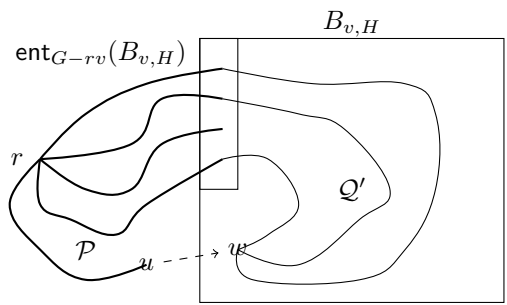

\begin{lem}\label{superlarge}
Let $ D\supseteq G $ be rooted digraphs. If for every $ uv\in D\setminus G $ there is an $ I\in 
\mathcal{G}_G(v) $ such that 
$ (I+uv)\notin \mathcal{G}_{G+uv}(v)  $, then whenever an $ H$ is $G $-vertex-large it is $ D $-vertex-large as well.
\end{lem}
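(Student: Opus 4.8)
The plan is to route everything through the characterisation of largeness, Lemma~\ref{char of largeness}: a subdigraph $H\subseteq D$ is $D$-vertex-large exactly when $u\in B_{v,H}$ holds for every edge $uv\in D\setminus H$. Since $H$ is $G$-vertex-large and $H\subseteq G$, that lemma already yields $u\in B_{v,H}$ for all $uv\in G\setminus H$; and because $H\subseteq G\subseteq D$ we have $D\setminus H=(D\setminus G)\cup(G\setminus H)$, so the entire task collapses to proving $u\in B_{v,H}$ for an arbitrary $uv\in D\setminus G$. I would also first record that no such edge can start at $r$: if $rv\in D\setminus G$, then appending the trivial path $\{rv\}$ to any path-system witnessing $I\in\mathcal{G}_G(v)$ witnesses $I+rv\in\mathcal{G}_{G+rv}(v)$, so no $I$ could have $(I+rv)\notin\mathcal{G}_{G+rv}(v)$, contradicting the hypothesis; hence $\mathsf{out}_D(r)=\mathsf{out}_G(r)\subseteq H$ and every edge of $D\setminus H$ has a non-root tail.

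Now fix $uv\in D\setminus G$ with $u\neq r$ and suppose toward a contradiction that $u\notin B_{v,H}$. The decisive step is to apply Lemma~\ref{berak coloop edge} to the chain $D\supseteq G\supseteq H$, the vertex $v$, and the edge $uw:=uv$ --- that is, with the head of the new edge playing the role of both ``$v$'' and ``$w$''. Its conclusion is that $(I+uv)\in\mathcal{G}_{G+uv}(v)$ for \emph{every} $I\in\mathcal{G}_G(v)$, which directly contradicts the standing hypothesis of the present lemma, namely that some $I\in\mathcal{G}_G(v)$ has $(I+uv)\notin\mathcal{G}_{G+uv}(v)$. This contradiction forces $u\in B_{v,H}$, finishing the reduction and hence, via Lemma~\ref{char of largeness}, the proof.

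What is left is to verify the hypotheses of Lemma~\ref{berak coloop edge} in this configuration. Three are immediate: $D\supseteq G\supseteq H$ and $uv\in D\setminus G$ are given, $u\notin B_{v,H}$ is our assumption (in the role of $u\notin B_{w,H}$ since $w=v$), and $\mathsf{ent}_{H-rv}(B_{v,H})=\mathsf{ent}_{G-rv}(B_{v,H})$ is precisely the ``furthermore'' clause of Lemma~\ref{char of largeness} applied to the $G$-vertex-large digraph $H$. The one genuinely new point is $v\in\mathsf{int}_{H-rv}(B_{v,H})$: since $\{v\}$ is always a $v$-bubble, $v\in B_{v,H}$, so it suffices to show $v\notin\mathsf{ent}_{H-rv}(B_{v,H})$, i.e.\ that $v$ has no in-neighbour outside $B_{v,H}$ within $H-rv$. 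Applying Proposition~\ref{B_CE hatara} in $H$ with any $S\in\mathfrak{S}_H(v)$ gives $N^{\text{in}}_{H-rv}(v)\subseteq B_{S,v,H}$ and $B_{S,v,H}\in\mathsf{bubb}_H(v)$, whence $N^{\text{in}}_{H-rv}(v)\subseteq B_{S,v,H}\subseteq B_{v,H}$ by Corollary~\ref{bigest bubble}, exactly as needed.

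I expect the proof to carry essentially no computational weight beyond this bubble bookkeeping. The main point requiring care --- the ``obstacle'', such as it is --- is conceptual rather than technical: seeing that Lemma~\ref{berak coloop edge} is the right tool, and that it must be invoked with the new edge's head as \emph{both} of its designated vertices, so that its conclusion is literally the negation of the present lemma's hypothesis. Once that is set up, checking $v\in\mathsf{int}_{H-rv}(B_{v,H})$ and the entrance equality is routine, and the final appeal to the characterisation of largeness closes the argument.
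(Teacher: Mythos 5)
Your proposal is correct and follows essentially the same route as the paper: contradiction via the characterisation of largeness (Lemma~\ref{char of largeness}), the entrance equality $\mathsf{ent}_{H-rv}(B_{v,H})=\mathsf{ent}_{G-rv}(B_{v,H})$ from its ``furthermore'' clause, and the decisive application of Lemma~\ref{berak coloop edge} with $w:=v$ to contradict the hypothesis. The only cosmetic differences are that you dispose of tails $u=r$ up front (the paper absorbs this into the trivial case of Lemma~\ref{berak coloop edge}) and you verify $v\in\mathsf{int}_{H-rv}(B_{v,H})$ via Proposition~\ref{B_CE hatara} rather than Proposition~\ref{nagybubi hatar}.
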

\begin{sbiz}
Suppose for a contradiction that $ H $ is $ G $-large but not $ D $-large. By applying Lemma \ref{char of 
largeness} with $ D $ and $ H $, we 
obtain that there is some 
$ uv\in D\setminus H $ with $ u\notin B_{v,H} $. Since $ H $ is assumed to be $G $-large, we may conclude by Lemma 
\ref{char of 
largeness} that $ uv\notin G $ 
and $ \mathsf{ent}_{G-rv}(B_{v,H})=\mathsf{ent}_{H-rv}(B_{v,H}) $. 
Finally $ v\in 
\mathsf{int}_{H-rv}(B_{v,H}) $ 
by Proposition \ref{nagybubi hatar}. We use Lemma \ref{berak 
coloop edge} with $ w:=v $ 
to obtain
$ (I+uv)\in \mathcal{G}_{G+uv}(v) $ for every $ I\in 
\mathcal{G}_G(v) $  which is a contradiction.
\end{sbiz}

\begin{proof}[Proof of Lemma \ref{large quasi flame}]
By applying Zorn's lemma, we may pick a $ \subseteq $-maximal quasi-flame $ F\subseteq D $. Lemma \ref{superlarge} 
ensures that  whenever an $ L $ is $ F $-large it is $ D $ -large as well.
\end{proof}
\subsection{Preserving the quasi-flame property via preserving largeness}\label{subsec prev quasi}
\begin{claim}\label{reach finite preserved}
Let $ D $ be a rooted digraph and let $ X\subseteq V-r $ be finite. Suppose that there is an $ r $-fan $ \mathcal{P} $ in $ D $ 
with $ V_{\text{last}}(\mathcal{P})=X $. If $ L $ is $ D $-vertex-large, then there is  an $ r $-fan $ 
\mathcal{Q} $ in $ L $ with $ V_{\text{last}}(\mathcal{Q})=X $. 
\end{claim}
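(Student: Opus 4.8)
The plan is to reduce the statement to a single application of the characterisation of largeness (Lemma \ref{char of largeness}, Corollary \ref{reform largeness}) by adding one auxiliary sink vertex. First I would introduce a new vertex $t$ (enlarging $V$ by one point is harmless, since every lemma quoted below is stated for an arbitrary rooted digraph) and set $D^{*}:=D\cup\{xt:x\in X\}$ and $L^{*}:=L\cup\{xt:x\in X\}$; these are rooted digraphs with $L^{*}\subseteq D^{*}$. Because $t$ has out-degree $0$, for any rooted digraph $G$ containing all the edges $xt$ ($x\in X$), $r$-fans in $G$ with last-vertex set $X$ correspond to systems of $|X|$ internally disjoint $r\rightarrow t$ paths in $G$: append the edge $xt$ to the path ending at $x$; conversely the penultimate vertex of an $r\rightarrow t$ path in $G$ lies in $X$, distinct internally disjoint such paths have distinct penultimate vertices, and if there are $|X|$ of them then — here one uses $|X|<\aleph_{0}$ — these penultimate vertices exhaust $X$, so deleting the last edge of each gives an $r$-fan onto $X$. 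In particular the given fan in $D$ yields a family of $|X|$ internally disjoint $r\rightarrow t$ paths in $D^{*}$, and it suffices to produce $|X|$ internally disjoint $r\rightarrow t$ paths in $L^{*}$.

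The heart of the argument is the claim that $L^{*}$ is $D^{*}$-vertex-large, and I expect this (specifically the innocuous-looking step about bubbles, which silently relies on $t$ having no outgoing edge) to be the main place where care is needed. I would verify the criterion of Lemma \ref{char of largeness}. Note $D^{*}\setminus L^{*}=D\setminus L$, so let $uv\in D\setminus L$. Since $L$ is $D$-vertex-large, Lemma \ref{char of largeness} gives $u\in B_{v,L}$. Moreover $B_{v,L}\subseteq B_{v,L^{*}}$: the only edges of $L^{*}$ not already in $L$ point at the new vertex $t\notin V$, hence for every $B\subseteq V-r$ we have $L^{*}\cap(B\times B)=L\cap(B\times B)$ and $\mathsf{ent}_{L^{*}}(B)=\mathsf{ent}_{L}(B)$, so every $v$-bubble of $L$ is still a $v$-bubble of $L^{*}$. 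Thus $u\in B_{v,L^{*}}$ for every $uv\in D^{*}\setminus L^{*}$, and Lemma \ref{char of largeness} yields that $L^{*}$ is $D^{*}$-vertex-large.

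Finally I would conclude by a two-sided counting argument of Menger-duality type. By Corollary \ref{reform largeness} applied to the vertex $t$ there is some $S\in\mathfrak{S}_{L^{*}}(t)\cap\mathfrak{S}_{D^{*}}(t)$; fix a system $\mathcal{R}$ of internally disjoint $r\rightarrow t$ paths in $L^{*}$ from which $S$ arises by choosing one internal vertex per path, so $|\mathcal{R}|=|S|$. On one hand $S\in\mathfrak{S}_{D^{*}}(t)$ means $S$ separates $t$ from $r$ in $D^{*}-rt$, which equals $D^{*}$ as $rt\notin D^{*}$; hence $S$ meets, in pairwise distinct internal vertices, each of the $|X|$ internally disjoint $r\rightarrow t$ paths obtained from the given fan, so $|S|\ge|X|$. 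On the other hand each path of $\mathcal{R}$ meets $X$ in its penultimate vertex, these being distinct, so $|S|=|\mathcal{R}|\le|X|$. Therefore $|\mathcal{R}|=|X|$, and by the correspondence of the first paragraph deleting the edge into $t$ from each path of $\mathcal{R}$ produces an $r$-fan $\mathcal{Q}$ in $L$ with $V_{\text{last}}(\mathcal{Q})=X$, as required (for $X=\varnothing$ the empty fan already works).
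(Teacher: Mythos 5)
Your proof is correct, and it takes a genuinely different (and shorter) route than the paper's. Both arguments use the same auxiliary device of a new sink $t$ joined by the edges $xt$ ($x\in X$) to both digraphs, but from there they diverge. The paper argues by contradiction: assuming no fan exists in $L$, it gets $\kappa_{L'}(r,v)<|X|$ at the auxiliary vertex, uses Proposition \ref{nagybubi hatar} to identify the small separation $\mathsf{ent}_{L'}(B_{v,L'})$, finds an edge $uw\in D\setminus L$ entering the interior of that bubble, and then invokes the single-edge extension Lemma \ref{berak coloop edge} (and hence Claim \ref{one more path}) together with finiteness of $\kappa_D(r,w)$ and largeness of $L$ to reach a contradiction. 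You instead prove directly that $L^*$ is $D^*$-vertex-large, which works because the criterion of Lemma \ref{char of largeness} only involves edges of $D^*\setminus L^*=D\setminus L$ and because bubbles $B\subseteq V-r$ are untouched by edges pointing into the new vertex $t$ (so $B_{v,L}\subseteq B_{v,L^*}$); then Corollary \ref{reform largeness} at $t$ plus the finite Menger-duality count $|X|\le|S|=|\mathcal{R}|\le|X|$ extracts the fan, with the correspondence between $r$-fans onto the finite set $X$ and systems of $|X|$ internally disjoint $r\rightarrow t$ paths handled correctly (including the exclusion of the edge $rt$ and the empty case). What your route buys is economy: it avoids Lemma \ref{berak coloop edge} and Claim \ref{one more path} entirely and is constructive rather than by contradiction; what the paper's route reflects is reuse of the extension machinery it has already built for the largeness-faithful quasi-flame. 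Your handling of the enlarged vertex set is also fine (alternatively, one can reuse an isolated vertex of $V$, as the paper does, to stay formally within the fixed $V$).
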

\begin{sbiz}
Suppose for a contradiction that $ L $ does not contain a desired $ r $-fan $ \mathcal{Q} $. We may assume that there is a $ 
v\in V $ 
which is 
an isolated vertex in $ D $  (otherwise we consider  an isomorphic copy of $ D$  on a proper subset of $ V $). Extend $ D 
$ and $ 
L $ with  
the  edges $ xv\ (x\in X) $ to obtain $ D' $ and $ L' $ respectively. Clearly
$ \kappa_{L'}(r,v)<\kappa_{D'}(r,v)=\left|X\right| $. Since $ rv\notin D' $, Proposition \ref{nagybubi hatar} ensures $ 
\kappa_{L'}(r,v)= \left|\mathsf{ent}_{L'}(B_{v,L'})  \right| $.  By combining these,
$\kappa_{L'}(r,v)= \left|\mathsf{ent}_{L'}(B_{v,L'})  \right|<\left|X\right|<\aleph_0 $ (see Figure \ref{Xeler}). It follows that 
there is a $ uw\in 
D\setminus L $ with  $u\notin B_{v,L'} $ and $ w\in  \mathsf{int}_{L'}(B_{v,L'}) $ otherwise  
$\mathsf{ent}_{L'}(B_{v,L'})  $ would separate $ v $ from $ r $ in $ D' $ as well. 

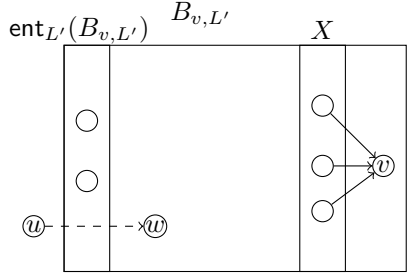
\begin{figure}[H]
\centering
\begin{tikzpicture}

\draw  (-2,2) node (v1) {} rectangle (2.5,-1);
\draw  (v1) rectangle (-1.4,-1);
\draw  (1.1,2) rectangle (1.7,-1.);

\node[circle,inner sep=0pt,draw,minimum size=8] (v3) at (2.2,0.4) {$v$};
\node[circle,inner sep=0pt,draw,minimum size=8] (v2) at (1.4,1.2) {};
\node[circle,inner sep=0pt,draw,minimum size=8] (v4) at (1.4,0.4) {};
\node[circle,inner sep=0pt,draw,minimum size=8] (v5) at (1.4,-0.2) {};

\node[circle,inner sep=0pt,draw,minimum size=8] at (-1.7,1) {};
\node[circle,inner sep=0pt,draw,minimum size=8] at (-1.7,0.2) {};
\node[circle,inner sep=0pt,draw,minimum size=8] (v7) at (-0.8,-0.4) {$w$};
\node[circle,inner sep=0pt,draw,minimum size=8] (v6) at (-2.4,-0.4) {$u$};

\node at (-1.8,2.2) {$\mathsf{ent}_{L'}(B_{v,L'})  $};
\node at (1.4,2.2) {$X$};
\node at (-0.2,2.4) {$B_{v,L'}$};

\draw  (v2) edge[->] (v3);
\draw  (v4) edge[->] (v3);
\draw  (v5) edge[->] (v3);

\draw  (v6) edge[->, dashed] (v7);
\end{tikzpicture}
\caption{$ B_{v,L'} $ and $ uw\in D\setminus L $ ($ X $ may have common vertices with $\mathsf{ent}_{L'}(B_{v,L'})  $) 
}\label{Xeler}
\end{figure} 

By applying  \ref{berak coloop edge} with $ 
D' $ and $ G:=H:=L' $, we 
may 
conclude that $ (I+uw)\in \mathcal{G}_{L'+uw}(w) $ for all $ I\in \mathcal{G}_{L'}(w) $. From the 
construction is clear that $ v $ has no outgoing edges and $ w\neq v $ therefore $ (I+uw)\in \mathcal{G}_{L+uw}(w) $ holds 
for all $ I\in 
\mathcal{G}_{L}(w) $  as well. The finite separation  $\mathsf{ent}_{L'}(B_{v,L'})  $ witnesses that $  
\kappa_{L}(r,w)<\aleph_0 $. Since $ L $ is 
$ D $-large,  $  \kappa_{D}(r,w)=\kappa_{L}(r,w)<\aleph_0 $. Take a system $ \mathcal{P} $ of internally 
disjoint $ r\rightarrow w $ paths in $ L $ with $ \left|\mathcal{P}\right|=\kappa_{D}(r,w) $. Then for  $ 
I=A_{\text{last}}({\mathcal{P}})  $, we obtain $ (A_{\text{last}}({\mathcal{P}}) +uw)\in \mathcal{G}_{L+uw}(w) 
$, which implies  that $ L $ contains a system of 
internally disjoint $ r\rightarrow w $ paths of size $ \left|A_{\text{last}}({\mathcal{P}}) 
\right|+1=\kappa_{D}(r,v)+1<\aleph_0 
$ which is a 
contradiction.
\end{sbiz}

\begin{proof}[Proof of Lemma \ref{key lemma1}]
Suppose that $ D $ is a quasi-flame and $ L $ is $ D $-large. Let $ v\in V-r $ be arbitrary. Assume first that $ 
\kappa_{D}(r,v)<\aleph_0 $. Then $ 
\kappa_{D}(r,v)=\left|\mathsf{in}_{D}(v)\right| $ follows from the 
fact that $ D $ is a quasi-flame. By the largeness of $ L $, we have $ \kappa_{L}(r,v)= \kappa_{D}(r,v) $. By combining 
these, 
we may conclude $ \kappa_{L}(r,v)=\left|\mathsf{in}_{D}(v)\right|<\aleph_0 $ which means 
$\mathsf{in}_{D}(v)=\mathsf{in}_{L}(v)\in \mathcal{G}_{L}(v)  $.

Suppose now that $ \kappa_{D}(r,v)\geq\aleph_0 $ and let $ J\subseteq \mathsf{in}_L(v) $ be finite.  Since $ D $ is a 
quasi-flame we can pick a $ \mathcal{P} $ witnessing 
$ J\in \mathcal{G}_{D}(v) $. Let $ \mathcal{P}' $  be the $ r $-fan that we get by the deletion of the last edges of the paths $ 
\mathcal{P}  $. Since none of the paths in $ \mathcal{P}' $ goes through $ v $ and $ \kappa_{D}(r,v)\geq\aleph_0 $, we can 
extend $ \mathcal{P}' $  in $ D $ by a new path to obtain an $ r $-fan $ \mathcal{P}'' $ where $ 
V_{\text{last}}(\mathcal{P}'')=:X $  consists of 
 the tails of the edges in $ J $ and $ v 
 $ . By Claim \ref{reach finite preserved}, there is  an $ r $-fan $ \mathcal{Q}'' $ 
in $ L $ with $ V_{\text{last}}(\mathcal{Q}'')=X $.  To obtain 
$ Q' $, we delete the unique path in $ \mathcal{Q} $ which terminates 
$ v $. Then none of the paths in $ Q' $ goes through $ v $ and hence by extending them with the edges $ J $, the resulting 
path-system $ \mathcal{Q} $ witnesses $ J\in 
\mathcal{G}_{L}(v) $.
\end{proof}

\section{Open problems}

\subsection{Beyond countability}
One can replace in Theorem \ref{main result flame} the countability of $ D $ by the formally weaker 
assumption that $ \kappa_D(r,v)\leq \aleph_0 $ for every $ v\in V-r $ 
(it is an easy application of Davies-trees  \cite{soukup2014davies}). We believe that more is true.

\begin{conj}
We may omit the countability of $ D $ in Theorem \ref{main result flame}.   
\end{conj}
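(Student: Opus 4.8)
The plan is to keep the three structural lemmas of Section 3 --- Lemmas \ref{large quasi flame}, \ref{key lemma} and \ref{key lemma1}, which are already proved for rooted digraphs of arbitrary cardinality --- and to replace only the $\omega$-indexed recursion in the proof of Theorem \ref{main result flame} by a transfinite one, by induction on $\lambda:=|V|$. The base case $\lambda\le\aleph_0$ is Theorem \ref{main result flame}, and the case ``$\kappa_D(r,v)\le\aleph_0$ for every $v$'' is already available via Davies trees, so the content of the induction is to accommodate vertices of uncountable local connectivity. For the inductive step I would fix a large regular $\theta$, a well-ordering of $H(\theta)$, and a continuous $\in$-increasing chain $\langle M_\xi:\xi<\lambda\rangle$ of elementary submodels of $(H(\theta),\in,<)$ with $\{D,r\}\in M_0$, $|M_\xi|<\lambda$, and $V=\bigcup_{\xi<\lambda}(M_\xi\cap V)$; writing $V_\xi:=M_\xi\cap V$, one cuts $D$ into the increments $V_{\xi+1}\setminus V_\xi$ (each of size $<\lambda$) and processes them in order. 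Having first passed to a quasi-flame by Lemma \ref{large quasi flame}, one maintains $D$-largeness at every stage by Lemma \ref{key lemma} and the quasi-flame property by Lemma \ref{key lemma1} --- including at limits, where the current digraph is the intersection of all earlier ones and still contains every fixed path-system, exactly as in the proof of Theorem \ref{main result flame}.

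Reflection is what the chain buys. Because $B_{v,D}=\bigcup\mathsf{bubb}_D(v)$ and the families $\mathsf{bubb}_D(v)$, $\mathfrak I_D(v)$, $\mathfrak S_D(v)$, $\mathcal G_D(v)$ are definable from the parameters $D$ and $v$, one has $B_{v,D},\mathfrak I_D(v),\mathfrak S_D(v)\in M_\xi$ whenever $v\in M_\xi$, hence $B_{v,D}\cap M_\xi=B_{v,D}\cap V_\xi$, and by elementarity together with Proposition \ref{nagybubi hatar} and Theorem \ref{inf Menger} the model $M_\xi$ contains an Erdős-Menger separation for $v$ and a path-system witnessing it. The reflection lemma I would need is that such objects, computed inside $M_\xi$ for the auxiliary digraph governing an increment, are genuinely Erdős-Menger in the whole $D$ --- i.e. $\mathfrak S_{D'}(v)\cap M_\xi\subseteq\mathfrak S_D(v)$ for the relevant $D'$ --- which is precisely the step that upgrades the Aharoni-Berger theorem from an equality of cardinalities to a statement localising along the chain. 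Granting it, one reruns inside each increment the single-vertex bookkeeping of the proof of Theorem \ref{main result flame}: for the next vertex $v$, fix a $\mathcal P_v\in\mathfrak I_D(v)$ in the current digraph covering the already-fixed ingoing edges of $v$, then delete the unused ingoing edges of $v$; the union of all the $\mathcal P_v$ is the desired $D$-large flame $E$, with the flame property at $v$ witnessed by $\mathcal P_v$ as before.

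The main obstacle, and the reason the plain Davies-tree argument stalls at $\kappa_D(r,v)\le\aleph_0$, is the interaction of this scheme with uncountable-connectivity vertices, which splits into two linked difficulties. First, a bubble $B_{v,D}$ for a $v$ in increment $\xi$ can reach backward into $V_\xi$, so the digraph $D'$ fed to the inductive hypothesis cannot be the induced subdigraph on $V_{\xi+1}$: it must be a contraction identifying everything below a suitable Erdős-Menger frontier of $V_\xi$ with $r$, and one must check (in the spirit of Propositions \ref{B_CE hatara} and \ref{nagybubi hatar}) that this contraction destroys neither Erdős-Menger separations nor the quasi-flame and largeness-faithfulness properties. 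Second, and more seriously, when $\kappa_D(r,v)$ is uncountable the set of ingoing edges of $v$ fixed before $v$ is processed can be infinite, whereas a quasi-flame only covers \emph{finite} subsets of $\mathsf{in}(v)$ by an infan; so one needs either to strengthen Lemma \ref{large quasi flame} to produce a quasi-flame in which every subset of $\mathsf{in}_F(v)$ of size $<\kappa_F(r,v)$ is coverable, or to arrange the enumeration so that when $v$ is processed the set of its already-fixed ingoing edges has size $<\kappa_D(r,v)$. I expect that once the correct ``increment digraph'' is isolated and shown to inherit the properties of Section 3, and this strengthened covering guarantee is in place, the remaining work is routine transfinite bookkeeping and the conjecture follows by induction on $|V|$.
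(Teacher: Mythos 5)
This statement is the paper's open Conjecture, not a theorem: the paper proves only the countable case (Theorem \ref{main result flame}), remarks that countability can be relaxed to ``$\kappa_D(r,v)\le\aleph_0$ for every $v$'' via Davies trees, and leaves everything beyond that open. Your text is a research plan rather than a proof, and the two places where it defers work are exactly where the difficulty of the conjecture lives. First, the ``reflection lemma'' $\mathfrak{S}_{D'}(v)\cap M_\xi\subseteq\mathfrak{S}_D(v)$ is simply assumed (``Granting it, \dots''). Elementarity gives you, for $v\in M_\xi$, \emph{some} $S\in\mathfrak{S}_D(v)$ with $S\in M_\xi$, but not $S\subseteq M_\xi$: when $\kappa_D(r,v)$ is uncountable, the separation and its witnessing path-system need not be contained in any model of size $<\lambda$, so the increment $V_{\xi+1}\setminus V_\xi$ cannot be handled by an ``increment digraph'' obtained by contracting everything below a frontier inside $V_\xi$ --- the relevant frontier may be spread over all later stages. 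You acknowledge that the correct increment digraph has to be isolated and shown to inherit the Section 3 properties, but that is the content of the conjecture, not routine bookkeeping.

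Second, the covering problem at vertices of uncountable local connectivity is flagged but not solved, and neither of your two escape routes works as stated. The engine of the countable proof is that when $v_n$ is processed only finitely many of its ingoing edges are fixed, so the quasi-flame property (covering \emph{finite} subsets of $\mathsf{in}(v)$) plus one application of Pym's theorem suffices. Strengthening Lemma \ref{large quasi flame} to cover all subsets of size $<\kappa_F(r,v)$ is not a cosmetic change: its proof runs through Zorn's lemma, Lemma \ref{superlarge} and the single-edge extension Lemma \ref{berak coloop edge}, whose augmenting-walk argument adds one path at a time; covering prescribed \emph{infinite} sets of ingoing edges is essentially the flame property itself, so this route begs the question. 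The alternative of arranging the enumeration so that fewer than $\kappa_D(r,v)$ ingoing edges of $v$ are fixed before $v$ is processed is in general impossible: a vertex processed at stage $\alpha$ may have up to $|\alpha|$ fixed ingoing edges (one from each earlier path-system), and nothing in your setup keeps this below $\kappa_D(r,v)$ when $\lambda$ is large. Finally, maintaining largeness ``at every stage, including limits'' is not delivered by Lemma \ref{key lemma} as you claim: that lemma compares a subdigraph to the \emph{original} $D$ with witnesses for every vertex that has lost an ingoing edge, and the paper explicitly warns that largeness is not transitive, so an iterated deletion scheme along a transfinite chain needs a genuinely new argument at limit stages. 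As it stands, the proposal assembles plausible machinery but establishes nothing beyond what the paper already states.
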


\subsection{Upper and lower bounds}
\begin{ques}
Let $ F\subseteq L\subseteq D $ be a rooted digraphs where $ F $ is a vertex-flame and $ L $ is $ D 
$-vertex-large. Is there necessarily an $ E $ with $ F\subseteq E\subseteq L $ such that $ E $ is a vertex-flame which is $ D 
$-vertex-large?
\end{ques}
It is true if $ \kappa_D(r,v)<\aleph_0 $ for $ v\in V-r $. We can simply take a $\subseteq  $-maximal quasi-flame in $ 
L $ which 
extends $ F $. It is automatically a flame and its $ L $-largeness (see the proof before subsection \ref{subsec prev quasi}) implies 
that it is $ D 
$-vertex-large as well. 
\subsection{Preserving all Erdős-Menger separations in a flame}
 Consider the reformulation of $ D $-largeness in Corollary \ref{reform largeness}. If 
$ \kappa_D(r,v)<\aleph_0 $ then $ \mathfrak{S}_D(v)\cap \mathfrak{S}_F(v)\neq \varnothing $ implies $ 
\mathfrak{S}_D(v)\subseteq \mathfrak{S}_F(v) $. Thus in a finite rooted digraph we  preserve automatically every 
Erdős-Menger separation when we demand $ D $-largeness but it is usually false for infinite digraphs. It seems a natural 
question 
if we can always preserve all the Erdős-Menger separations in a flame. 
We construct a rooted digraph $ D $ of size $ 2^{\aleph_0} $ which witnesses that the answer is no (however the question 
remains open for smaller digraphs).
 
Consider the digraph at 
Figure \ref{counterexample}. Extend it with all the edges  $ v_iv_{j,k}\   (i\leq \omega,\, j<\omega,\, k<2) $. For every
$ f\in {}^{\omega}2  $, pick a new vertex $ v_f $ with 
$ N_{D}^{\text{in}}(v_f):=\{ v_{i,f(i)}\, :\, i<\omega \} $ and $ N_{D}^{\text{out}}(v_f)=\varnothing $. It is easy to check 
that $ 
\{ v_{i,0}v_i,v_{i,1}v_i  \}\notin \mathcal{G}_D(v_i) $  for $ i<\omega $ and 
$ N_D^{\text{out}}(r)\in \mathfrak{S}_D(v_f)  $ for each 
$ f\in {}^{\omega}2 $. Let   $ F\subseteq D $ be a flame. Then there is an 
$ f\in {}^{\omega}2 $ such that $ v_{i,1-f(i)}v_i\notin F $ for $ i<\omega $. We show that 
$ N_D^{\text{out}}(r)\notin \mathfrak{S}_F(v_f) $. Suppose for a contradiction that 
$ \mathcal{P}:=\{ P_{w}: w\in N_D^{\text{out}}(r) \} $ exemplifies 
that 
$ N_D^{\text{out}}(r)\in \mathfrak{S}_F(v_f) $  where $ P_{w} $ goes through $ w $. Then path $ P_{u_i} $ necessarily 
uses vertex $ v_{i,f(i)} $ because $ u_iv_{i,1-f(i)}\notin F $. But then the paths $ 
P_{u_i} $ use already all the vertices  $\{ 
v_{i,f(i)}\, :\, i<\omega \}= N_{D}^{\text{in}}(v_f) $ and therefore $ P_{v_\omega} $ has no chance to reach $ v_f $ 
which is a contradiction.
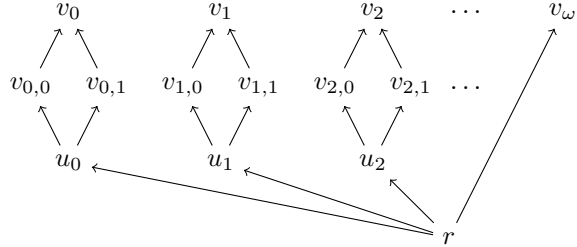
\begin{figure}[H]
\centering

\begin{tikzpicture}

\node (v13) at (3,-1) {$r$};
\node (v1) at (-2,0) {$u_0$};
\node (v8) at (0,0) {$u_1$};
\node (v9) at (2,0) {$u_2$};

\node (v2) at (-2.5,1) {$v_{0,0}$};
\node (v3) at (-1.5,1) {$v_{0,1}$};
\node (v5) at (-0.5,1) {$v_{1,0}$};
\node (v7) at (0.5,1) {$v_{1,1}$};
\node (v10) at (1.5,1) {$v_{2,0}$};
\node (v12) at (2.5,1) {$v_{2,1}$};
\node at (3.25,1) {$\dots$};
\node at (3.25,2) {$\dots$};

\node (v4) at (-2,2) {$v_0$};
\node (v6) at (0,2) {$v_1$};
\node (v11) at (2,2) {$v_2$};
\node (v14) at (4.5,2) {$v_{\omega}$};

\draw  (v1) edge[->] (v2);
\draw  (v1) edge[->] (v3);
\draw  (v2) edge[->] (v4);
\draw  (v3) edge[->] (v4);
\draw  (v5) edge[->] (v6);
\draw  (v7) edge[->] (v6);
\draw  (v8) edge[->] (v5);
\draw  (v8) edge[->] (v7);
\draw  (v9) edge[->] (v10);
\draw  (v10) edge[->] (v11);
\draw  (v9) edge[->] (v12);
\draw  (v12) edge[->] (v11);
\draw  (v13) edge[->] (v1);
\draw  (v13) edge[->] (v8);
\draw  (v13) edge[->] (v9);
\draw  (v13) edge[->] (v14);
\end{tikzpicture}
\caption{The first step of the construction of $ D $}\label{counterexample}
\end{figure} 

\subsection{Extending a flame by a $ \mathcal{P}\in \mathfrak{I}_D(v) $  keeping flame property}
\begin{ques}
Let $ D $ be a rooted digraph and let $ F\subseteq D $ be a vertex-flame. Is there necessarily  for every $ v\in V-r $ a $ 
\mathcal{P}\in 
\mathfrak{I}_D(v) $ such that $ F\cup \bigcup \mathcal{P} $ is a vertex-flame?
\end{ques}
 One can 
  prove that if the set of the new edges given by $ \mathcal{P} $ to $ F $ is $ \subseteq $-minimal among the choices $ 
  \mathfrak{I}_D(v) $ then 
  $ F\cup \bigcup \mathcal{P} $ is  a flame.  Calvillo-Vives proved (the edge version of) Theorem \ref{Flame 
 alaptetel} 
 based on 
this observation. Examples show that the premisses of the implication may fail to be satisfiable  in infinite digraphs.

\subsection{The edge version of flames and largeness}\label{edge version}
Let $ D $ be a rooted digraph. For $ v\in V-r $, we define $ \boldsymbol{\mathfrak{E}_{D}(v)} $ to be the set of those 
edge-disjoint  $ 
r\rightarrow v $ path-systems $ \mathcal{P} $ for which one can choose exactly one edge from each path in $ \mathcal{P} $ in 
such a way that the resulting $ C $ is an $ rv $-cut in $ D $. An $ L\subseteq D $ 
is $ \boldsymbol{D }$\textbf{-edge-large} if for every $ v\in V-r $ a $ \mathcal{P}\in \mathfrak{E}_{D}(v) $ lies in $ L $.
A rooted digraph $ D $ is an \textbf{edge-flame} if for all $ v\in V-r $ there is a system 
$ \mathcal{P} $ of edge-disjoint $ r\rightarrow v $ path such that $ A_{\text{last}}(\mathcal{P})=\mathsf{in}_F(v) $. 

\begin{ques}\label{question edge-flame}
Does there exist a $ D $-edge-large $ E $ which is an edge-flame for every  rooted digraph $ D $?
\end{ques}

It seems that most of the tools we developed works in the edge version as well. A major new difficulty is that a 
$ \mathcal{P}\in \mathfrak{E}_{D}(v) $ may give infinitely many ingoing edges to a vertex other than 
$ v $ 
(not just at 
most one as in the 
vertex version) and  therefore our quasi-flame approach is not sufficient itself to overcome this complication.

The edge version of the problem is stronger than the vertex version in the following sense. If the answer for  Question 
\ref{question edge-flame} is yes, 
then one can derive the vertex version from it  as we sketched in 
Remark \ref{vertex from edge}.
Similarly simple 
reduction in the other direction seems unlikely.  
The edge version analogues of all of our earlier open questions are also open.

\begin{bibdiv}
\begin{biblist}
\bib{aharoni2009menger}{article}{
   author={Aharoni, Ron},
   author={Berger, Eli},
   title={Menger's theorem for infinite graphs},
   journal={Invent. Math.},
   volume={176},
   date={2009},
   number={1},
   pages={1--62},
   issn={0020-9910},
   review={\MR{2485879}},
   doi={10.1007/s00222-008-0157-3},
}

\bib{diestel2016graph}{book}{
   author={Diestel, Reinhard},
   title={Graph theory},
   series={Graduate Texts in Mathematics},
   volume={173},
   edition={5},
   publisher={Springer, Berlin},
   date={2017},
   pages={xviii+428},
   isbn={978-3-662-53621-6},
   review={\MR{3644391}},
   doi={10.1007/978-3-662-53622-3},
}

\bib{flameVives}{thesis}{
	author={Calvillo-Vives, Gilberto}, 
	title={Optimum branching systems}, 
	date={1978},
	type={Ph.D. Thesis},
	organization={University of Waterloo},
}

\bib{lovasz1973connectivity}{article}{
   author={Lov\'{a}sz, L.},
   title={Connectivity in digraphs},
   journal={J. Combinatorial Theory Ser. B},
   volume={15},
   date={1973},
   pages={174--177},
   review={\MR{0325439}},
   doi={10.1016/0095-8956(73)90018-x},
}
\bib{pym1969linking}{article}{
   author={Pym, J. S.},
   title={The linking of sets in graphs},
   journal={J. London Math. Soc.},
   volume={44},
   date={1969},
   pages={542--550},
   issn={0024-6107},
   review={\MR{0234858}},
   doi={10.1112/jlms/s1-44.1.542},
}

\bib{soukup2014davies}{article}{
   author={Soukup, D\'{a}niel T.},
   author={Soukup, Lajos},
   title={Infinite combinatorics plain and simple},
   journal={J. Symb. Log.},
   volume={83},
   date={2018},
   number={3},
   pages={1247--1281},
   issn={0022-4812},
   review={\MR{3868049}},
   doi={10.1017/jsl.2018.8},
}
\end{biblist}
\end{bibdiv}

\end{document}